\definecolor{labelkey}{rgb}{0.6,0,1}
\definecolor{violet}{rgb}{0.580,0.,0.827}
\newcounter{cst}
\def\ctel#1{C_{\refstepcounter{cst}\@bsphack
\protected@write\@auxout{}%
           {\string\newlabel{#1}{{\thecst}{\thepage}}}\thecst}}
\newcommand{\cter}[1]{C_{\ref{#1}}}
\newcounter{cexp}
\def\terml#1{T_{\refstepcounter{cexp}\@bsphack
\protected@write\@auxout{}%
           {\string\newlabel{#1}{{\thecexp}{\thepage}}}\thecexp}}
\newcommand{\mathbi}[1]{{\boldsymbol #1}}
\renewenvironment{proof}{\noindent {\bf Proof } }{$\square$ }
\newtheorem{theorem}{Theorem}[section]
\newtheorem{remark}[theorem]{Remark}
\newtheorem{lemma}[theorem]{Lemma} 
\newtheorem{definition}[theorem]{Definition}
\newtheorem{corollary}[theorem]{Corollary}
\numberwithin{equation}{section}
\newcommand{\ba}{\begin{array}{llll}   }
\newcommand{\bac}{\begin{array}{c}}
\newcommand{\bari}{\begin{array}{r}}
\newcommand{\ea}{\end{array}}
\newcommand{\ban}{\begin{array}{llll}}
\newcommand{\ean}{\end{array}}
\newcommand{\be}{\begin{equation}}
\newcommand{\ee}{\end{equation}}
\newcommand{\beqsys }{\beqtab \left \{ \begin{array}{l}}
\newcommand{\eeqsys }{\end{array} \right . \eeqtab }
\newcommand{\benum}{\begin{enumerate}}
\newcommand{\eenum}{\end{enumerate}}
\newcommand{\beqtab}{\begin{eqnarray}} 
\newcommand{\eeqtab}{\end{eqnarray}}
\newcommand{\dsp}{\displaystyle}
\def\H{\mathbf{H}}
\def\L{\mathbf{L}}
\def\Gd{{\Gamma_D}}
\def\Gn{{\Gamma_N}}
\def\X#1{\mathbf{X}_{#1,\Gd}}
\def\XD#1{\mathbf{X}_{#1,\Gd}}
\newcommand{\stiff}{\mathbb{C}}
\newcommand{\stab}{\mathbb{D}}
\newcommand{\strain}{\mbox{\boldmath{$\varepsilon$}}}
\newcommand{\stress}{\mbox{\boldmath{$\sigma$}}}
\def\bu{\mathbf{u}}
\def\bv{\mathbf{v}}
\def\bw{\mathbf{w}}
\def\buex{\overline{\mathbf{u}}}
\def\bvarphi{\mathbi{\varphi}}
\def\bF{\mathbf{F}}
\def\bD{\mathbf{\Delta}}
\def\bg{\mathbf{g}}
\def\bh{\mathbf{h}}
\def\sym{\mathcal S_{d\times d}}
\def\T{{\mathcal T}}
\def\bV{\mathbf{V}}
\def\CT{\mathcal{T}}
\def\n{\mathbf{n}}
\def\x{\mathbi{x}}
\renewcommand{\d}{{\rm d}}
\def\argmin{\mathop{\rm argmin}}
\newcommand{\disc}{{\mathcal{D}}}
\renewcommand{\div}{{\rm div}}
\renewcommand{\O}{\Omega}
\newcommand{\N}{\mathbb N}
\newcommand{\R}{\mathbf{R}}
\def\dev{\mathop{\rm dev}}
\def\tr{\mathop{\rm tr}}
\newcommand{\btau}{\mbox{\boldmath{$\tau$}}}
\newcommand{\bomega}{\mbox{\boldmath{$\omega$}}}
\newcommand{\bd}{\mbox{\boldmath{$d$}}}
\def\bL{\btau}
\def\bG{\bomega}
\begin{document}

\title{Gradient Schemes for linear and non-linear elasticity equations}

\author{J\'er\^ome Droniou}
\address{School of Mathematical Sciences,
Monash University, Victoria 3800, Australia.}

\author{Bishnu P. Lamichhane}
\address{School of Mathematical \& Physical Sciences, University of Newcastle,
University Drive, NSW 2308, Callaghan, Australia.}

\email{jerome.droniou@monash.edu,Bishnu.Lamichhane@newcastle.edu.au}

\date{19 November 2013}

\begin{abstract}
The Gradient Scheme framework provides a unified analysis setting for
many different families of numerical methods for diffusion equations.
We show in this paper that the Gradient Scheme framework can be adapted to elasticity equations,
and provides error estimates for linear elasticity and convergence
results for non-linear elasticity. We also establish that several
classical and modern numerical methods for elasticity are embedded in the
Gradient Scheme framework, which allows us to obtain convergence results
for these methods in cases where the solution does not satisfy the
full $H^2$-regularity or for non-linear models.
\end{abstract}

\subjclass[2010]{65N12, 65N15, 65N30}

\keywords{elasticity equations, linear, non-linear, numerical methods, convergence
analysis, Gradient Schemes}

\maketitle

\section{Introduction}\label{sec:intro}

We are interested in the numerical approximation of
the (possibly non-linear) elasticity equation
\be\label{eq:elas-strong}
\ba
-\div(\stress(x,\strain(\buex))=\bF\,,&\mbox{in }\Omega,\\
\strain(\buex)=
\frac{\nabla \buex + (\nabla \buex)^T}{2}\,,&\mbox{in }\Omega,\\
\buex = 0\,,&\mbox{on }\Gd\,,\\
\stress(x,\strain(\buex))\n=\bg\,,&\mbox{on }\Gn,
\ea
\ee
where $\Omega\subset \R^d$ is the body submitted to the force field $\bF$, 
$\mathbf{n}$ is the unit normal to $\partial\Omega$ pointing outward $\Omega$,
$\Gd$ and $\Gn$ are subsets of $\partial\Omega$ on which the body is
respectively fixed and submitted to traction, $\stress$ and $\strain$
are the second-order stress and strain tensors, $\buex=(\bar u_i)_{i=1,\ldots,d}:\Omega\to\R^d$ describes
local displacements and the gradient is written in columns: $\nabla\buex=
(\partial_j \bar u_i)_{i,j=1,\ldots,d}$.

This formulation of elasticity equations covers a number of classical models:
\begin{itemize}
\item the linear elasticity model with $\stress(x,\strain(\bu))=\stiff(x)\strain(\bu)$,
in which $\stiff$ is a $4$th order stiffness tensor,
\item the damage models of \cite{CCC10b} with $\stress(x,\strain(\bu))=(1-D(\strain(\bu)))
\stiff(x)\strain(\bu)$, where the damage index $D$ is a scalar function,
\item the non-linear Hencky-von Mises elasticity model \cite{NEC86}
in which $\stress(x,\strain(\bu))=\widetilde{\lambda}(\dev(\strain(\bu)))\tr(\strain(\bu))\mathbf{I}
+2\widetilde{\mu}(\dev(\strain(\bu)))\strain(\bu)$, where $\widetilde{\lambda}$ and
$\widetilde{\mu}$ are the non-linear Lam\'e coefficients, $\tr$ is the trace
operator and $\dev(\bL)=(\bL-\frac{1}{2}\tr(\bL)\mathbf{I}):
(\bL-\frac{1}{2}\tr(\bL)\mathbf{I})$ is the deviatoric operator.
\end{itemize}

Convergence of conforming Finite Element methods for the linear elasticity problem can be 
obtained by using standard techniques \cite{Cia78,BS94}. This convergence analysis 
covers the case when the solution does not possess a full $H^2$-regularity. 
However, convergence analysis for non-conforming Finite Element methods 
is most often done using the full $H^2$-regularity of the solution 
\cite{BS92,BS94,BCR04,LRW06,BH06}. Similarly, the
convergence of numerical methods for non-linear elasticity models
only seems to have been established for conforming approximations (i.e. the space(s) of approximate solutions
are subspaces of the space(s) of continuous solutions, whether a displacement
or several-fields formulation is chosen) and assuming
the full $H^2$-regularity of the solution \cite{GS02,CD04,BM05}.

The Gradient Scheme framework is a setting, based on a few discrete
elements and properties, which has been recently developed to analyse
numerical methods for a vast number of diffusion models: 
linear or non-linear, local or non-local, 
stationary or transient models, etc. (see \cite{EYM11,DRO12,EYM13-2,EYM11-2,DRO13}).
This framework is also currently being extended to the linear poroelasticity equation, see
\cite{AEL13}.
It has been shown that a number of well-known
methods for diffusion equations are Gradient Schemes \cite{EYM12,DRO12,EYM13,DRO13}:
Galerkin methods (including conforming Finite Element methods), Mixed Finite Element methods,
Hybrid Mimetic Mixed methods (including Hybrid Finite Volumes,
Mimetic Finite Differences and Mixed Finite Volumes), Discrete Duality
Finite Volume methods, etc.
Moreover, the Gradient Scheme framework enables convergence analysis
of all these numerical methods for all the afore-mentioned models
under very unrestrictive assumptions. 
The key feature of Gradient Schemes that they
provide a unified framework for the convergence analysis
of many different numerical schemes for linear and non-linear diffusion
equations without assuming the full $H^2$-regularity of the solution.
In practice, the full $H^2$-regularity is not achieved due to 
the non-convexity of the domain,  corner singularities, discontinuities of the stiffness tensor, 
non-smooth data and mixed boundary conditions.

The aim of this paper is to extend the Gradient Scheme framework to linear and non-linear
elasticity models, thus showing that all the advantages of this analysis framework
can be applied to classical numerical techniques developed for elasticity equations.
The paper is organised as follows. In the next section, we introduce
the notion of Gradient Discretisations, used to define Gradient Schemes
for \eqref{eq:elas-strong}. We also state the three properties,
\emph{consistency}, \emph{limit-conformity} and \emph{coercivity},
that a Gradient Discretisation must satisfy in order to lead to a stable
and convergent numerical scheme. In Section \ref{sec:lin}, we first analyse the
convergence of Gradient Schemes for linear elasticity equations, providing
an error estimate under very weak regularity assumptions on the
data and solution.
We then carry out the convergence analysis
for fully non-linear models, proving the convergence of the approximate
solution under the same unrestrictive assumptions. Section \ref{sec:ex}
is devoted to the study of some examples of Gradient Scheme.
We show in particular that many schemes for elasticity equations,
including methods developed to handle the nearly incompressible limit
and acute bending, do fall in the framework of Gradient Schemes and that
our convergence analysis -- for both linear and non-linear models -- therefore
applies to them. Some conclusions of the paper are summarised in the final section.

\section{Definition of Gradient Schemes for the elasticity equation}\label{sec:gs}

Our general assumptions on the data are as follows.
\be\label{hyp:omega}
\ba
\Omega\mbox{ is a connected open subset of $\R^d$ ($d\ge 1$) with Lipschitz boundary,}\\
\mbox{$\Gd$ and $\Gn$ are disjoint subsets of $\partial\Omega$ such that $\partial\Omega=
\Gd\cup\Gn$ and}\\
\mbox{$\Gd$ has a non-zero $(d-1)$-dimensional measure},
\ea
\ee
\be\label{hyp:fg}
\bF\in \L^2(\Omega)\,,\quad \bg\in \L^2(\Gn)
\ee
(where $\L^2(X)=(L^2(X))^d$) and, denoting by $\sym$ the set of symmetric $d\times d$ tensors,
\be\label{hyp:stress}
\ba
\dsp \stress:(x,\bL)\in\Omega\times \sym
\mapsto \stress(x,\bL)\in \sym\mbox{ is a Caratheodory}\\
\dsp\mbox{function (i.e. measurable w.r.t. $x$ and continuous w.r.t. $\bL$) and}\\
\exists \sigma^*,\sigma_*>0\mbox{ such that, for a.e. }x\in\Omega\,,\;
\forall \bL,\bG\in \sym\,,\\
\quad\begin{array}{l@{\qquad}l}
|\stress(x,\bL)|\le \sigma^*|\bL|+\sigma^*&\mbox{(growth)},\\
\stress(x,\bL):\bL\ge \sigma_*|\bL|^2&\mbox{(coercivity)},\\
(\stress(x,\bL)-\stress(x,\bG)):(\bL-\bG)\ge 0&\mbox{(monotonicity)},
\end{array}
\ea
\ee
where, for $\bL,\bG\in \R^{d\times d}$,
$\bL:\bG=\sum_{i,j=1}^d \bL_{ij}\bG_{ij}$ and $|\bL|^2=\bL:\bL$. 
In the following, we also denote by
$\cdot$ and $|\cdot|$ the Euclidean product and norm on $\R^d$. 

\begin{remark} Note that the linear elasticity and the Hencky-von Mises models both
satisfy these assumptions (see \cite[Lemma 4.1]{BAR02}
for a proof of the monotonicity of the Hencky-von Mises model).
One can also see that the damage model $\stress(x,\strain(\bu))=(1-D(\strain(\bu)))\stiff(x)\strain(\bu)$
satisfies \eqref{hyp:stress} if $1-D(\xi)=f(|\xi|)$ where, for some
$0<\underline{d}\le \overline{d}$, $f$ is continuous
$[0,\infty)\to [\underline{d},\overline{d}]$ and such that
$s\in[0,\infty)\to sf(s)$ is non-decreasing.
\label{rem:dam}\end{remark}

Under these assumptions, and defining $\H^1(\Omega)=H^1(\Omega)^d$,
$\gamma:\H^1(\Omega)\to \L^{2}(\partial\Omega)$ the trace operator and $\H^1_{\Gd}(\Omega)=
\{\bv\in \H^1(\Omega)\,:\,\gamma(\bv)=0\mbox{ on $\Gd$}\}$,
the weak formulation of \eqref{eq:elas-strong} is
\be\label{eq:elas-weak}
\ba
\dsp\mbox{Find $\buex\in \H^1_{\Gd}(\Omega)$ such that, for any $\bv
\in \H^1_{\Gd}(\Omega)$},\\
\ba\dsp \int_\Omega \stress(x,\strain(\buex)(x)):\strain(\bv)(x)\d x&=&\dsp 
\int_\Omega \bF(x)\cdot\bv(x)\d x\\
&&\dsp +\int_{\Gn}\bg(x)\cdot\gamma(\bv)(x)\d S(x).
\ea
\ea
\ee

Gradient Schemes for such equations are based on Gradient Discretisations,
which consist in introducing a discrete space, gradient, trace and reconstructed function,
and using those to approximate \eqref{eq:elas-weak}.
The following definitions are adapted to elasticity equations, and to
non-homogeneous mixed boundary conditions, from the theory developed in \cite{EYM11,DRO12}
for diffusion equations with homogeneous Dirichlet boundary conditions.

\begin{definition}[Gradient Discretisation for the elasticity equation]
\label{def:grad-disc}~\\
A Gradient Discretisation $\disc$ for Problem \eqref{eq:elas-strong}
is $\disc =(\X{\disc},\Pi_\disc,\T_{\disc},\nabla_\disc)$, where:
\begin{enumerate}
\item the set of discrete unknowns $\X{\disc}$ is a finite dimensional
vector space on $\R$ whose definition includes the null trace condition
on $\Gd$,
\item the linear mapping $\Pi_\disc~:~\X{\disc}\to \L^2(\O)$ is the reconstruction of the approxi\-ma\-te function,
\item the linear mapping $\T_{\disc}:\X{\disc}\to
\L^2(\Gn)$ is a discrete trace operator,
\item the linear mapping $\nabla_\disc~:~\X{\disc}\to \L^2(\O)^{d}$ is the discrete
gradient operator. It must be chosen such that $\Vert \cdot \Vert_{\disc} := \Vert \nabla_\disc \cdot \Vert_{\L^2(\O)^{d}}$ is a norm on $\X{\disc}$.
\end{enumerate}
\end{definition}

Once a Gradient Discretisation is available, the related Gradient Scheme consists in
writing the weak formulation \eqref{eq:elas-weak} with the continuous spaces and
operators replaced by their discrete counterparts.

\begin{definition}[Gradient Scheme for the elasticity equation]\label{def:grad-scheme}~\\
If $\disc = (\X{\disc},\Pi_\disc,\T_{\disc},\nabla_\disc)$ is a Gradient Discretisation
in the sense of Definition \ref{def:grad-disc}
then we define the related \emph{Gradient Scheme} for \eqref{eq:elas-strong} by
\be\label{grad-scheme}
\begin{array}{l}
\dsp \mbox{Find $\bu\in \X{\disc}$ such that,
$\forall \bv\in \X{\disc}$,}\\[0.5em]
\ba
\dsp \int_\Omega \stress(x,\strain_\disc(\bu)(x)):\strain_\disc(\bv)(x)\d x&=&\dsp
\int_\Omega \bF(x)\cdot\Pi_\disc\bv(x)\d x\\
&&\dsp\qquad+\int_{\Gn}\bg(x)\cdot\T_\disc(\bv)(x)\d S(x)
\ea
\end{array}\ee
where $\strain_\disc(\bv)=\frac{\nabla_\disc \bv+(\nabla_\disc \bv)^T}{2}$.
\end{definition}

The definitions of consistency, limit-conformity and compactness
of Gradient Discretisations for Equation \eqref{eq:elas-strong} are the same as for diffusion equations,
taking into account the fact that functions are vector- or tensor-valued
in the elasticity model.

The consistency of a sequence of Gradient Discretisations ensure that
any function in the energy space can be approximated, along with its gradient,
by discrete functions.

\begin{definition}[Consistency] \label{def:cons}
Let $\disc$ be a Gradient Discretisation in the sense of
Definition \ref{def:grad-disc}, and let $S_{\disc}:\H^1_{\Gd}(\Omega)\to [0,+\infty)$
be defined by
\begin{equation}
\ba
\dsp \forall \bvarphi\in \H^1_{\Gd}(\Omega)\,,\\[0.5em]
\dsp S_{\disc}(\bvarphi) = \min_{\bv\in \X{\disc}}\big\{\Vert \Pi_\disc \bv - \bvarphi\Vert_{\L^2(\O)} + \Vert 
\nabla_\disc \bv -\nabla\bvarphi\Vert_{\L^2(\O)^{d}}\big\}.
\ea
\label{def:sdisc}
\end{equation}
A sequence $(\disc_m)_{m\in\N}$ of Gradient Discretisations is said to be \textbf{consistent} 
if,  for all $\bvarphi\in \H^1_{\Gd}(\Omega)$, $S_{\disc_m}(\bvarphi)\to 0$ as  $m\to\infty$.
\end{definition}

The limit-conformity of a sequence of Gradient Discretisations ensures that the
dual of the discrete gradient behaves as an approximation of the divergence operator.
We let
\[
\H_{\div}(\Omega,\Gn) = \{\bL\in \L^2(\Omega)^{d}\,:\,
\div\bL\in \L^{2}(\O)\,,\;\gamma_\n(\bL)\in\L^2(\Gn)\}
\]
where $\gamma_\n(\bL)$ is the normal trace of $\bL$. This normal trace is well defined
in $\H^{-1/2}(\partial\Omega)$ if $\bL\in \L^2(\Omega)^d$ and $\div(\bL)\in \L^2(\Omega)$
(\footnote{The divergence of a tensor $\bL$ is taken row by row,
i.e. if $\bL=(\bL_{i,j})_{i,j=1,\ldots,d}$ then
$\div(\bL)=(\sum_{j=1}^d\partial_j \bL_{i,j})_{i=1,\ldots,d}$. This definition is
consistent with our definition of $\nabla$ by column in the sense that $-\div$ is
the formal dual operator of $\nabla$.}).

\begin{definition}[Limit-conformity] \label{def:lim-conf}
Let $\disc$ be a Gradient Discretisation
in the sense of Definition \ref{def:grad-disc}.
We define $W_{\disc}$: $\H_{\div}(\Omega,\Gn)^d\to [0,+\infty)$  by
\begin{equation}
\begin{array}{l@{}l}
\dsp \forall \bL\in \H_{\div}(\Omega,\Gn)^d\,,\\
\dsp W_{\disc}(\bL) = \mathop{\max_{\bv\in \X{\disc}}}_{\bv\not=0}
\frac{1}{\Vert  \bv \Vert_{\disc}}&\dsp \Bigg|
\int_\Omega \big(\nabla_\disc \bv(x):\bL(x) +  \Pi_\disc \bv(x) \cdot \div(\bL)(x)\big)  \d x\\ 
&\dsp \quad-\int_{\Gn}\gamma_\n(\bL)(x)\cdot\T_\disc(\bv)(x)\d S(x)\Bigg|.
\end{array}
\label{def:wdisc}\end{equation}
A sequence $(\disc_m)_{m\in\N}$ of Gradient Discretisations is said to be
\textbf{limit-conforming} if, for all $\bL\in \H_{\div}(\Omega,\Gn)^d$,
$W_{\disc_m}(\bL)\to 0$ as $m\to\infty$.
\end{definition}

The definition of coercivity of Gradient Discretisations for the elasticity
equation starts in the same way as for diffusion equations.
However, since the natural energy estimate for
elasticity equations is not on $\nabla\buex$ but on $\strain(\buex)$,
as in the continuous case we must add to it some discrete form of
K\"orn's inequality.

\begin{definition}[Coercivity] \label{def:coer}
Let $\disc$ be a Gradient Discretisation
in the sense of Definition \ref{def:grad-disc}. We define $C_\disc$
(maximum of the norms of the linear mappings $\Pi_\disc$
and $\T_\disc$) by
\be
C_\disc =  \max_{\bv\in \X{\disc}\setminus\{0\}}\left(\frac {\Vert \Pi_\disc \bv\Vert_{\L^2(\O)}} {\Vert \bv \Vert_{\disc}},\frac {\Vert \T_\disc \bv\Vert_{\L^2(\Gn)}} {\Vert \bv \Vert_{\disc}}\right).
\label{def:Cdisc}\ee 
and $K_\disc$ (constant of the discrete K\"orn inequality) by
\be
K_\disc =  \max_{\bv\in \X{\disc}\setminus\{0\}}\frac {\Vert \bv\Vert_{\disc}} {\Vert \strain_\disc(\bv) \Vert_{\L^2(\O)^{d}}}.
\label{def:Kdisc}\ee 

A sequence $(\disc_m)_{m\in\N}$ of Gradient Discretisations is said to be \textbf{coercive} if  there exists $C_P>0$ such that $C_{\disc_m}+K_{\disc_m} \le C_P$ for all $m\in\N$.
\end{definition}

The definition of $C_\disc$ gives the following discrete Poincar\'e's inequality:
\be\label{ineq:poincare}
\forall \bv\in \X{\disc}\,:\quad
\Vert \Pi_\disc \bv\Vert_{\L^2(\O)} \le C_\disc \Vert \nabla_\disc \bv \Vert_{\L^2(\O)^{d}}.
\ee

\begin{remark}[Non-homogeneous Dirichlet boundary conditions]
Non-homogene\-ous Dirichlet boundary conditions $\buex=\bh$
can also be considered in \eqref{eq:elas-strong} and in the framework of
Gradient Schemes, upon introducing an interpolation operator and
modifying the definition of limit-conformity to take into account this interpolation
operator. See \cite{DRO13} for diffusion equations.
\end{remark}

\begin{remark} Although it does not seem to relate to any elasticity
model we know of, we could also handle a dependency of $\stress$ on $\buex$, i.e.
$\stress=\stress(x,\buex,\strain(\buex))$, upon adding
a compactness property of Gradient Discretisations (see \cite{DRO12}
for the handling of such lower order terms in diffusion equations).
\end{remark}

\section{Convergence analysis}

\subsection{Linear case}\label{sec:lin}

We assume here that the relationship between the strain and stress is linear,
and thus given by a stiffness $4$th order tensor:

\be
\ba
\mbox{There exists a measurable $\stiff:\Omega\to \R^{d^4}$ such that
$\stress(x,\bL)=\stiff(x)\bL$ and}\\
\exists \sigma^*,\sigma_*>0\mbox{ s.t., for a.e. }x\in\Omega\,,\;\forall \bL,\bG\in \R^{d\times d}\,,\\
\begin{array}{ll}
\stiff(x)\bL:\bG = \bL : \stiff(x)\bG\mbox{ and }
(\stiff(x)\bL)^T=\stiff(x)\bL^T\,, &\mbox{(symmetry)},\\
|\stiff(x)|\le \sigma^*&\mbox{(bound)},\\
\stiff(x)\bL:\bL\ge \sigma_*|\bL|^2&\mbox{(coercivity)},
\end{array}
\ea
\label{hyp:stiff}\ee

\begin{remark} These assumptions imply \eqref{hyp:stress} and cover
the classical linear elasticity model $\stress(x,\strain(\bu))=\lambda \tr(\strain(\bu))\mathbf{I}
+2\mu\strain(\bu)$ (i.e. the Hencky-von Mises model
with Lam\'e coefficients not depending on $\bu$).
\end{remark}

In this linear setting, the Gradient Scheme \eqref{grad-scheme} takes the form
\be\label{grad-scheme:lin}
\begin{array}{l}
\dsp \mbox{Find $\bu\in \X{\disc}$ such that,
$\forall \bv\in \X{\disc}$,}\\[0.5em]
\ba
\dsp \int_\Omega \stiff(x)\strain_\disc(\bu)(x):\strain_\disc(\bv)(x)\d x&\dsp =
\int_\Omega \bF(x)\cdot\Pi_\disc\bv(x)\d x\\
&\dsp\qquad\qquad+\int_{\Gn}\bg(x)\cdot\T_\disc(\bv)(x)\d S(x).
\ea
\end{array}\ee
The proof of the following error estimate is an adaptation of
similar estimates done in \cite{EYM11} for linear diffusion equations
with homogeneous Dirichlet boundary conditions.

\begin{theorem}[Error estimate of Gradient Scheme for linear elasticity]\label{thm:error-est}
We assu\-me that \eqref{hyp:omega}, \eqref{hyp:fg} and \eqref{hyp:stiff} hold and
we let $\buex$ be the solution
to \eqref{eq:elas-weak} (note that $\stress(x,\strain(\buex))=
\stiff\strain(\buex) \in \H_{\div}(\Omega,\Gn)^d$ since $\bF\in \L^2(\Omega)$ and $\bg\in \L^2(\Gn)$).

If $\disc$ is a Gradient Discretization in the sense of Definition \ref{def:grad-disc}
then the Gradient Scheme \eqref{grad-scheme:lin} has a unique solution
$\bu_\disc$ and it satisfies:
\begin{align}
& \Vert \nabla \buex - \nabla_\disc \bu_\disc\Vert_{\L^2(\O)^{d}}
\le \frac{K_\disc^2}{\sigma_*} W_\disc(\stiff \strain(\buex)) +
\left(\frac{K_\disc^2 \sigma^*}{\sigma_*}+1\right)S_\disc(\buex),
\label{err:grad}\\
& \Vert \buex -\Pi_\disc  \bu_\disc\Vert_{\L^2(\O)}
\le   \frac{C_\disc K_\disc^2}{\sigma_*} W_\disc(\stiff\strain(\buex))
+\left(\frac{C_\disc K_\disc^2 \sigma^*}{\sigma_*}+1\right) S_\disc(\buex),
\label{err:u}
\end{align}
where $S_\disc$, $W_\disc$, $C_\disc$ and $K_\disc$
are defined by \eqref{def:sdisc}, \eqref{def:wdisc}, \eqref{def:Cdisc}
and \eqref{def:Kdisc}.
\end{theorem}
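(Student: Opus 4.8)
The plan is to follow the standard three-part structure for Gradient Scheme error estimates: first establish existence and uniqueness of the discrete solution, then prove the gradient error estimate \eqref{err:grad}, and finally deduce the $L^2$ estimate \eqref{err:u} via the discrete Poincaré inequality.

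For existence and uniqueness, I would note that the bilinear form $(\bu,\bv)\mapsto\int_\Omega \stiff\strain_\disc(\bu):\strain_\disc(\bv)\,\d x$ is a square linear system on the finite-dimensional space $\X{\disc}$, so it suffices to show it is coercive. Testing the scheme with $\bv=\bu$ and using the coercivity of $\stiff$ from \eqref{hyp:stiff} gives $\int_\Omega\stiff\strain_\disc(\bu):\strain_\disc(\bu)\ge\sigma_*\Vert\strain_\disc(\bu)\Vert_{\L^2(\O)^d}^2$. The discrete Körn inequality \eqref{def:Kdisc} then bounds $\Vert\bu\Vert_\disc\le K_\disc\Vert\strain_\disc(\bu)\Vert_{\L^2(\O)^d}$, so the form controls $\Vert\cdot\Vert_\disc^2$, which is a genuine norm on $\X{\disc}$ by Definition \ref{def:grad-disc}(4). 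Hence the only solution of the homogeneous problem is zero, giving existence and uniqueness.

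The heart of the proof is \eqref{err:grad}, and this is where the main obstacle lies. The key idea is to combine the continuous weak formulation \eqref{eq:elas-weak} with the discrete scheme \eqref{grad-scheme:lin} by integrating by parts against $\stiff\strain(\buex)$. Specifically, I would take an arbitrary test function $\bv\in\X{\disc}$ and estimate $\int_\Omega\stiff\strain(\buex):\strain_\disc(\bv)\,\d x$ in two ways. On one hand, applying the definition of $W_\disc$ in \eqref{def:wdisc} to the tensor $\bL=\stiff\strain(\buex)\in\H_{\div}(\Omega,\Gn)^d$ — using that $\div(\stiff\strain(\buex))=-\bF$ and $\gamma_\n(\stiff\strain(\buex))=\bg$ from the strong form — lets me replace the discrete-gradient pairing by the right-hand side of the scheme up to an error controlled by $W_\disc(\stiff\strain(\buex))\Vert\bv\Vert_\disc$. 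The symmetry of $\stiff$ lets me write $\stiff\strain(\buex):\nabla_\disc\bv=\stiff\strain(\buex):\strain_\disc(\bv)$. Subtracting the scheme, the crucial quantity $\int_\Omega\stiff(\strain_\disc(\bu_\disc)-\strain(\buex)):\strain_\disc(\bv)\,\d x$ is bounded by $W_\disc(\stiff\strain(\buex))\Vert\bv\Vert_\disc$. The main technical delicacy is that I must compare $\nabla_\disc\bu_\disc$ not directly to $\nabla\buex$ but through an intermediate interpolant: I introduce the element $\bw\in\X{\disc}$ realising (or nearly realising) the minimum in $S_\disc(\buex)$, so that $\Vert\Pi_\disc\bw-\buex\Vert+\Vert\nabla_\disc\bw-\nabla\buex\Vert\le S_\disc(\buex)$. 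Choosing the test function $\bv=\bu_\disc-\bw$ and expanding $\int_\Omega\stiff\strain_\disc(\bv):\strain_\disc(\bv)$, the coercivity of $\stiff$ produces $\sigma_*\Vert\strain_\disc(\bv)\Vert^2$ on the left, while the right-hand side splits into the limit-conformity term $W_\disc(\stiff\strain(\buex))\Vert\bv\Vert_\disc$ and a consistency term $\sigma^*\Vert\strain(\buex)-\nabla_\disc\bw\Vert\,\Vert\strain_\disc(\bv)\Vert$ coming from the bound on $\stiff$. After dividing by $\Vert\strain_\disc(\bv)\Vert$ and invoking $\Vert\bv\Vert_\disc\le K_\disc\Vert\strain_\disc(\bv)\Vert$ once more, the factor $K_\disc^2/\sigma_*$ on the limit-conformity term emerges, and a triangle inequality $\Vert\nabla\buex-\nabla_\disc\bu_\disc\Vert\le\Vert\nabla\buex-\nabla_\disc\bw\Vert+\Vert\nabla_\disc\bw-\nabla_\disc\bu_\disc\Vert$ together with $\Vert\nabla_\disc\bw-\nabla_\disc\bu_\disc\Vert=\Vert\bv\Vert_\disc$ assembles the bound \eqref{err:grad}, the $+1$ and $\sigma^*$ terms accounting for the final triangle-inequality contribution.

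Finally, \eqref{err:u} follows by writing $\Vert\buex-\Pi_\disc\bu_\disc\Vert\le\Vert\buex-\Pi_\disc\bw\Vert+\Vert\Pi_\disc(\bw-\bu_\disc)\Vert$, bounding the second term by $C_\disc\Vert\bw-\bu_\disc\Vert_\disc=C_\disc\Vert\bv\Vert_\disc$ via the Poincaré inequality \eqref{ineq:poincare}, and reusing the bound on $\Vert\bv\Vert_\disc$ already obtained, with the first term absorbed into $S_\disc(\buex)$. I expect the careful bookkeeping of which error term carries which factor of $K_\disc$, $C_\disc$, $\sigma^*$ and $\sigma_*$ to be the only genuinely fiddly part; the structure itself is the diffusion-equation argument of \cite{EYM11} with $\nabla_\disc$ replaced throughout by $\strain_\disc$ and the discrete Körn inequality supplying the passage between the two norms.
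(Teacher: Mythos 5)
Your proposal is correct and follows essentially the same route as the paper's proof: the same use of $W_\disc$ applied to $\stiff\strain(\buex)$ with $\div(\stiff\strain(\buex))=-\bF$ and $\gamma_\n(\stiff\strain(\buex))=\bg$, the same interpolant realising the minimum in $S_\disc(\buex)$ (the paper's $P_\disc\buex$), the same test function $\bv=\bu_\disc-\bw$, and the same interplay of coercivity, the discrete K\"orn inequality and the triangle inequality, followed by the Poincar\'e inequality for \eqref{err:u}. The only (harmless) variation is that you establish uniqueness by direct coercivity of the bilinear form, whereas the paper deduces it a posteriori from the error estimate applied with $\bF=0$, $\bg=0$.
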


\begin{proof}
Let us first notice that if we prove \eqref{err:grad} for any solution $\bu_\disc$
to the Gradient Scheme \eqref{grad-scheme:lin}, then the existence and uniqueness
of this solution follows. Indeed, \eqref{grad-scheme:lin}
defines a square linear system and if $\bF=0$ and $\bg=0$ (meaning that $\buex=0$) then
\eqref{err:grad} shows that the only solution to this system is $0$,
since $||\nabla_\disc\cdot||_{\L^2(\Omega)^{d}}$ is a norm on $\X{\disc}$.
Hence, this system is invertible and \eqref{grad-scheme:lin} has a solution
for any right-hand side functions $\bF$ and $\bg$ satisfying \eqref{hyp:fg}.

Let us now prove the error estimates. Since $\stiff\strain(\buex)\in \H_{\div}(\Omega,\Gn)^d$,
the definition of $W_\disc$ gives, for any $\bv\in \X{\disc}$,
\begin{equation}
\label{proof:err1}
\begin{array}{r@{}l}
\dsp ||\nabla_\disc \bv||_{\L^2(\Omega)^{d}}&W_\disc(\stiff\strain(\buex))\\[0.5em]
\ge&{}\dsp \left\vert \int_\O \big(\nabla_\disc \bv(x):\stiff(x)\strain(\buex)(x)
+\Pi_\disc\bv(x)\cdot\div(\stiff \strain(\buex)\big)(x)\d x\right.\\[0.5em]
&\dsp \left. -\int_{\Gn}\gamma_\n(\stiff\strain(\buex))(x)\cdot \T_\disc(\bv)(x)\d S(x)\right\vert\\[0.5em]
\ge&{}\dsp \left\vert \int_\O \big(\nabla_\disc \bv(x):\stiff(x)\strain(\buex)(x)
-\Pi_\disc\bv(x)\cdot  \bF(x)\big)\d x\right.\\[0.5em]
&\dsp \left.-\int_{\Gn}\bg(x)\cdot \T_\disc(\bv)(x)\d S(x)\right\vert.
\ea
\end{equation}

By symmetry of $\stiff$ we have $\stiff \strain(\buex):\nabla_\disc\bv
=\stiff\strain(\buex):\strain_\disc(\bv)$
and \eqref{proof:err1} therefore gives,
since $\bu_\disc$ is a solution to \eqref{grad-scheme:lin},
\begin{multline}\label{eq:rap}
||\nabla_\disc \bv||_{\L^2(\Omega)^{d}}
W_\disc(\stiff\strain(\buex))\\
\ge\left\vert \int_\O \stiff(x)\strain(\buex)(x):\strain_\disc(\bv)(x)
-\stiff(x)\strain_\disc(\bu_\disc):\strain_\disc(\bv)(x)\d x\right\vert.
\end{multline}
Defining, for all $\bvarphi\in \H^1_{\Gd}(\Omega)$,
\begin{equation}\label{def:PD}
P_{\disc} \bvarphi = \argmin_{\bw\in \X{\disc}}\big\{\Vert \Pi_\disc \bw -\bvarphi\Vert_{\L^2(\O)} +
\Vert \nabla_\disc \bw - \nabla\bvarphi\Vert_{\L^2(\O)^{d}}\big\}
\end{equation}
and recalling the definition \eqref{def:sdisc} of $S_\disc$, we have
\begin{equation}
||\strain(\buex)-\strain_\disc(P_\disc\buex)||_{\L^2(\Omega)^{d}}
\le ||\nabla\buex-\nabla_\disc (P_\disc\buex)||_{\L^2(\Omega)^{d}}
\le S_\disc(\buex).
\label{proof:err1.1}\end{equation}
Using the bound of $\stiff$ in \eqref{hyp:stiff} and Estimate \eqref{eq:rap}, we deduce
\begin{equation}
\begin{array}{r@{}l}
\dsp\Bigg\vert \int_\Omega \stiff(x)&\dsp\strain_\disc(P_\disc\buex-\bu_\disc)(x):\strain_\disc(\bv)(x)
\d x\Bigg\vert\\[0.5em]
\le&{}\dsp \left\vert \int_\Omega \stiff(x)(\strain_\disc(P_\disc\buex)-\strain(\buex))(x):\strain_\disc(\bv)(x)
\d x\right\vert\\[1em]
&{}\dsp+\left\vert \int_\Omega \stiff(x)(\strain(\buex)-\strain_\disc(\bu_\disc))(x):\strain_\disc(\bv)(x)
\d x\right\vert\\[1em]
\le&\dsp\; \sigma^*S_\disc(\buex)||\strain_\disc(\bv)||_{\L^2(\Omega)^{d}}
+||\nabla_\disc \bv||_{\L^2(\Omega)^{d}}W_\disc(\stiff\strain(\buex))\\[0.5em]
\le&{}\dsp\; ||\nabla_\disc \bv||_{\L^2(\Omega)^{d}}
\big(W_\disc(\stiff\strain(\buex)) +\sigma^*S_\disc(\buex)\big).
\end{array}
\label{proof:err1.2}
\end{equation}
Plugging $\bv=P_\disc\buex-\bu_\disc\in \X{\disc}$
in \eqref{proof:err1.2} and
using the coercivity of $\stiff$ gives
\begin{multline}\label{proof:err2}
\sigma_*||\strain_\disc(P_\disc\buex-\bu_\disc)||_{\L^2(\Omega)^{d}}^2\\
\le ||\nabla_\disc (P_\disc\buex-\bu_\disc)||_{\L^2(\Omega)^{d}}
\big(W_\disc(\stiff\strain(\buex)) +\sigma^*S_\disc(\buex)\big).
\end{multline}
By definition \eqref{def:Kdisc} of $K_\disc$, we have
\[
||\nabla_\disc(P_\disc\buex-\bu_\disc)||_{\L^2(\Omega)^{d}}\le
K_\disc ||\strain_\disc(P_\disc\buex-\bu_\disc)||_{\L^2(\Omega)^{d}}
\]
and \eqref{proof:err2} thus leads to
\begin{equation}\label{proof:err3}
||\nabla_\disc (P_\disc\buex)-\nabla_\disc\bu_\disc||_{\L^2(\Omega)^{d}}
\le \frac{K_\disc^2}{\sigma_*}
\big(W_\disc(\stiff\strain(\buex)) +\sigma^*S_\disc(\buex)\big)
\end{equation}
and the proof of \eqref{err:grad} is concluded thanks to \eqref{proof:err1.1}.
The Poincar\'e inequality \eqref{ineq:poincare} and \eqref{proof:err3}
also give
\[
||\Pi_\disc (P_\disc\buex)-\Pi_\disc \bu_\disc||_{\L^2(\Omega)}
\le \frac{C_\disc K_\disc^2}{\sigma_*}
\big(W_\disc(\stiff\strain(\buex)) +\sigma^*S_\disc(\buex)\big),
\]
and the estimate $||\Pi_\disc (P_\disc\buex)-\buex||_{\L^2(\Omega)}
\le S_\disc(\buex)$ concludes the proof of \eqref{err:u}. \end{proof}

The following corollary is a straightforward consequence of Theorem \ref{thm:error-est}.

\begin{corollary}[Convergence of Gradient Schemes for linear
elasticity]
We assume that \eqref{hyp:omega}, \eqref{hyp:fg} and \eqref{hyp:stiff} hold.
We denote by $\buex$ the solution
to \eqref{eq:elas-weak}.

If $(\disc_m)_{m\in\N}$ is a sequence of Gradient Discretizations
in the sense of Definition \ref{def:grad-disc}, which is
consistent (Definition \ref{def:cons}), limit-conforming (Definition \ref{def:lim-conf})
and coercive (Definition \ref{def:coer}), and if $\bu_m\in \X{\disc_m}$
is the solution to the Gradient Scheme \eqref{grad-scheme:lin} with $\disc=\disc_m$,
then, as $m\to\infty$, $\Pi_{\disc_m} \bu_m\to \buex$ strongly in $\L^2(\Omega)$
and $\nabla_{\disc_m}\bu_m \to \nabla\buex$ strongly in $\L^2(\Omega)^{d}$.
\end{corollary}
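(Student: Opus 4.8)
The plan is to invoke Theorem \ref{thm:error-est} directly: it already supplies quantitative error estimates for each fixed Gradient Discretisation, and the three structural hypotheses on the sequence $(\disc_m)_{m\in\N}$ are precisely what is needed to drive the right-hand sides of those estimates to zero. So the corollary is essentially a limit-passage in the bounds \eqref{err:grad} and \eqref{err:u}.

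First I would recall, as already observed in the statement of Theorem \ref{thm:error-est}, that $\stiff\strain(\buex)\in \H_{\div}(\Omega,\Gn)^d$ (this follows from $\bF\in\L^2(\Omega)$ and $\bg\in\L^2(\Gn)$ through the weak formulation \eqref{eq:elas-weak}). Hence $W_{\disc_m}(\stiff\strain(\buex))$ is well defined for each $m$, and limit-conformity of the sequence gives $W_{\disc_m}(\stiff\strain(\buex))\to 0$ as $m\to\infty$. In the same way, consistency of the sequence yields $S_{\disc_m}(\buex)\to 0$.

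Next, applying Theorem \ref{thm:error-est} with $\disc=\disc_m$ provides, for each $m$, existence and uniqueness of $\bu_m$ together with the two estimates \eqref{err:grad} and \eqref{err:u}. The decisive use of the remaining hypothesis is here: coercivity furnishes a single constant $C_P$ with $C_{\disc_m}+K_{\disc_m}\le C_P$ for all $m$, so each coefficient multiplying $W_{\disc_m}$ and $S_{\disc_m}$ in those estimates — namely $K_{\disc_m}^2/\sigma_*$, $(K_{\disc_m}^2\sigma^*/\sigma_*+1)$, $C_{\disc_m}K_{\disc_m}^2/\sigma_*$ and $(C_{\disc_m}K_{\disc_m}^2\sigma^*/\sigma_*+1)$ — is bounded by a constant depending only on $C_P$, $\sigma^*$ and $\sigma_*$, and in particular independently of $m$.

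Putting these facts together, the right-hand side of each of \eqref{err:grad} and \eqref{err:u} is a product of an $m$-uniformly bounded coefficient with a quantity tending to $0$, hence it tends to $0$; this gives $\nabla_{\disc_m}\bu_m\to\nabla\buex$ strongly in $\L^2(\Omega)^d$ and $\Pi_{\disc_m}\bu_m\to\buex$ strongly in $\L^2(\Omega)$, which is the assertion. I do not expect any genuine obstacle: all the analytic work has been absorbed into Theorem \ref{thm:error-est}, and the corollary is merely the remark that the consistency and limit-conformity terms vanish while the coercivity assumption keeps the prefactors controlled. The only point deserving a line of verification is this uniform boundedness of the coefficients, which is immediate from $C_{\disc_m}+K_{\disc_m}\le C_P$.
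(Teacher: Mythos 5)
Your argument is correct and is exactly the route the paper intends: the paper gives no separate proof, stating only that the corollary is a straightforward consequence of Theorem \ref{thm:error-est}, and your filling-in (limit-conformity kills $W_{\disc_m}(\stiff\strain(\buex))$, consistency kills $S_{\disc_m}(\buex)$, coercivity uniformly bounds the prefactors via $C_{\disc_m}+K_{\disc_m}\le C_P$) is precisely that consequence spelled out.
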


\begin{remark}
This result is valid under no additional regularity assumption on the data
or $\buex$. It holds in particular if $\partial\Omega$ has singularities
or if $\stiff$ is discontinuous with respect
to $x$, which corresponds to a body made of several different materials
with interfaces (see e.g. \cite{BIS09}).

However, for most Gradient Schemes (and under reasonable assumptions on the mesh/discretisation),
there exists $C>0$ not depending on $\disc$ such that
\begin{eqnarray*}
\forall \bvarphi\in \H^2(\Omega)\cap \H^1_0(\Omega)\,,\;&&
S_\disc(\bvarphi)\le Ch_\disc||\bvarphi||_{\H^2(\Omega)}\,,\\
\forall \bL\in \H^1(\Omega)^{d}\,,\;&&
 W_\disc(\bL)\le Ch_\disc||\bL||_{\H^1(\O)^{d}},
\end{eqnarray*}
where $h_\disc$ measures the scheme's precision (e.g. some mesh size).
For such Gradient Schemes and when $\bu\in \H^2(\O)$ and $\stiff$ is Lipschitz continuous,
Theorem \ref{thm:error-est} gives $\mathcal O(h_\disc)$ error
estimate for the approximation of $\buex$ and its gradient.
We note that the solution is $H^2$-regular when we have a pure Dirichlet problem on a 
convex polygonal or polyhedral domain \cite{BS92,KMR01}. 
\end{remark}

\subsection{Non-linear case}

In the non-linear case, error estimates cannot be provided in general
but convergence of the Gradient Scheme \eqref{grad-scheme} can
still be proved without additional regularity assumptions on the data.

\begin{theorem}[Convergence of Gradient Schemes for non-linear
elasticity]\label{th:convnl}
Assu\-me that \eqref{hyp:omega}, \eqref{hyp:fg} and \eqref{hyp:stress} hold
and let $(\disc_m)_{m\in\N}$ be a sequence of Gradient Discretizations
in the sense of Definition \ref{def:grad-disc}, which is
consistent (Definition \ref{def:cons}), limit-conforming (Definition \ref{def:lim-conf})
and coercive (Definition \ref{def:coer}).

Then, for any $m\in\N$ there exists at least one solution $\bu_m\in
\X{\disc_m}$ to the Gradient Scheme \eqref{grad-scheme} with $\disc=\disc_m$ and,
up to a subsequence, as $m\to\infty$, $\Pi_{\disc_m}\bu_m$
converges weakly in $\L^2(\Omega)$ to some $\buex$ solution of \eqref{eq:elas-weak}
and $\nabla_{\disc_m}\bu_m$ converges weakly in $\L^2(\Omega)^{d}$
to $\nabla\buex$.

Moreover, if we assume that $\stress$ is strictly monotone in the following sense: 
\begin{equation}\label{hyp:strictmonotone}
\mbox{For a.e. $x\in\Omega$, for all $\bL\not=\bG$ in $\sym$}\,,\;
(\stress(x,\bL)-\stress(x,\bG)):(\bL-\bG)>0
\end{equation}
then, along the same subsequence, $\Pi_{\disc_m}\bu_m\to \buex$
strongly in $\L^2(\Omega)$ and
$\nabla_{\disc_m}\bu_m\to \nabla\buex$ strongly in $\L^2(\Omega)^{d}$.
\end{theorem}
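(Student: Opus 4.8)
The plan is to prove this convergence result for fully non-linear elasticity using the standard compactness/monotonicity machinery (Minty's trick), adapted to the Gradient Scheme framework. I will need to establish a priori estimates, extract weakly convergent subsequences, identify the limit as a weak solution, and finally upgrade to strong convergence under strict monotonicity.

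Let me think about the structure carefully.

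**Existence of a discrete solution.** The Gradient Scheme is a finite-dimensional nonlinear system. Using the coercivity assumption on $\stress$ (namely $\stress(x,\bL):\bL \geq \sigma_*|\bL|^2$), I can set up a Brouwer degree / topological argument. Define the map $F: \X{\disc_m} \to \X{\disc_m}$ (via Riesz representation with respect to the $\|\cdot\|_{\disc}$ inner product) and show it points outward on a large ball. This requires an a priori estimate.

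**A priori estimates.** Take $\bv = \bu_m$ in the scheme. Then
$$\int_\Omega \stress(x, \strain_{\disc_m}(\bu_m)):\strain_{\disc_m}(\bu_m) \, dx = \int_\Omega \bF \cdot \Pi_{\disc_m}\bu_m + \int_{\Gn} \bg \cdot \T_{\disc_m}(\bu_m).$$
Coercivity gives the LHS $\geq \sigma_* \|\strain_{\disc_m}(\bu_m)\|^2_{\L^2}$. The RHS is bounded using Cauchy-Schwarz, the discrete Poincaré inequality (controlled by $C_{\disc_m}$), and the trace bound (controlled by $C_{\disc_m}$ again). Using the discrete Körn inequality ($K_{\disc_m}$), $\|\bu_m\|_{\disc_m} = \|\nabla_{\disc_m}\bu_m\| \leq K_{\disc_m}\|\strain_{\disc_m}(\bu_m)\|$. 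Since the sequence is coercive, $C_{\disc_m} + K_{\disc_m} \leq C_P$ uniformly, so I get a uniform bound on $\|\bu_m\|_{\disc_m}$, hence on $\|\Pi_{\disc_m}\bu_m\|_{\L^2}$, $\|\nabla_{\disc_m}\bu_m\|_{\L^2}$, and $\|\T_{\disc_m}(\bu_m)\|_{\L^2(\Gn)}$.

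**Weak convergence and identification of limit.** By boundedness, extract subsequences so that $\Pi_{\disc_m}\bu_m \rightharpoonup u$ weakly in $\L^2$ and $\nabla_{\disc_m}\bu_m \rightharpoonup \mathcal{G}$ weakly in $\L^2(\Omega)^d$. I need to show that $u \in \H^1_{\Gd}(\Omega)$ and $\mathcal{G} = \nabla u$. This is where limit-conformity is crucial: using $W_{\disc_m}$ with test tensors $\bL \in \H_{\div}(\Omega,\Gn)^d$ (and the duality structure $-\div$ dual to $\nabla$), I can pass to the limit in the identity defining $W_{\disc_m}$ and identify $\mathcal{G}$ as the weak gradient of $u$ with correct boundary behavior. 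Also I need the limit of the discrete traces $\T_{\disc_m}(\bu_m)$ to match $\gamma(u)$ on $\Gn$ — again via limit-conformity.

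Now the **growth bound** (from \eqref{hyp:stress}: $|\stress(x,\bL)| \leq \sigma^*|\bL| + \sigma^*$) gives that $\stress(\cdot, \strain_{\disc_m}(\bu_m))$ is bounded in $\L^2(\Omega)^{d\times d}$, so up to a subsequence it converges weakly to some $\stress^\dagger \in \L^2$. Passing to the limit in the scheme (using consistency to build test functions: for any $\bvarphi \in \H^1_{\Gd}$, take $P_{\disc_m}\bvarphi$ as test function, whose reconstruction and discrete gradient converge strongly to $\bvarphi$ and $\nabla\bvarphi$ by consistency), I get
$$\int_\Omega \stress^\dagger : \strain(\bvarphi) = \int_\Omega \bF \cdot \bvarphi + \int_{\Gn} \bg \cdot \gamma(\bvarphi).$$
So $u$ solves the weak formulation with $\stress^\dagger$ in place of $\stress(x,\strain(u))$; I still must show $\stress^\dagger = \stress(x, \strain(u))$.

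**Minty's monotonicity argument.** This is the main obstacle. I use the monotonicity assumption. Consider $\int_\Omega (\stress(x, \strain_{\disc_m}(\bu_m)) - \stress(x, \strain(\bvarphi))):(\strain_{\disc_m}(\bu_m) - \strain(P_{\disc_m}\bvarphi)) \geq 0$ for test $\bvarphi$. I need to pass to the limit in each term. The tricky piece is controlling $\int_\Omega \stress(x, \strain_{\disc_m}(\bu_m)):\strain_{\disc_m}(\bu_m)$ in the limit. Here I use the scheme itself (test with $\bu_m$) together with the weak convergences and the limit formulation:
$$\limsup_m \int_\Omega \stress(x,\strain_{\disc_m}(\bu_m)):\strain_{\disc_m}(\bu_m) = \limsup_m \left(\int_\Omega \bF\cdot\Pi_{\disc_m}\bu_m + \int_{\Gn}\bg\cdot\T_{\disc_m}(\bu_m)\right),$$
and the RHS converges (by weak convergence of $\Pi_{\disc_m}\bu_m$ and $\T_{\disc_m}(\bu_m)$) to $\int_\Omega \bF\cdot u + \int_{\Gn}\bg\cdot\gamma(u) = \int_\Omega \stress^\dagger:\strain(u)$. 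This converts the energy into a limit I can control, letting Minty's trick yield $\stress^\dagger = \stress(x, \strain(u))$ by testing with $\bvarphi = u + t\bpsi$ and letting $t \to 0$. The delicate point I expect to be hardest is rigorously matching the discrete energy $\int \stress_{\disc_m}:\strain_{\disc_m}(\bu_m)$ against its continuous limit while only weak convergence of strains is available — this is exactly what the Minty device circumvents.

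**Strong convergence under strict monotonicity.** Finally, with $\stress^\dagger = \stress(x,\strain(u))$ identified, I revisit the energy identity: I have shown $\int_\Omega \stress(x,\strain_{\disc_m}(\bu_m)):\strain_{\disc_m}(\bu_m) \to \int_\Omega \stress(x,\strain(u)):\strain(u)$. Combining this with monotonicity, the quantity
$$\int_\Omega \big(\stress(x,\strain_{\disc_m}(\bu_m)) - \stress(x,\strain(u))\big):\big(\strain_{\disc_m}(\bu_m) - \strain(u)\big) \to 0.$$
Under \eqref{hyp:strictmonotone}, the integrand is nonnegative and vanishes in the limit, which forces $\strain_{\disc_m}(\bu_m) \to \strain(u)$ strongly in $\L^2$ (via a standard argument: the integrand converging to zero in $L^1$ with a strictly monotone structure implies a.e. convergence of strains up to a subsequence and then strong $L^2$ convergence by equi-integrability from the energy bound). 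The discrete Körn inequality then promotes strong convergence of $\strain_{\disc_m}(\bu_m)$ to strong convergence of $\nabla_{\disc_m}\bu_m$ toward $\nabla u$, and the discrete Poincaré inequality gives strong convergence of $\Pi_{\disc_m}\bu_m \to u$. I would remark that since $u$ is uniquely determined under strict monotonicity, the convergence holds for the whole sequence, not merely a subsequence.
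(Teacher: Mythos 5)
Your proposal follows essentially the same route as the paper: a priori bounds via the coercivity of $\stress$ and the discrete K\"orn/Poincar\'e constants, existence by a topological/surjectivity argument in finite dimension, weak compactness plus limit-conformity to identify the limit in $\H^1_{\Gd}(\Omega)$, consistency to build test functions, Minty's trick to identify the weak limit of the stresses, and a.e.\ convergence of the strains under strict monotonicity upgraded to strong $\L^2$ convergence. The only loose phrasing is ``equi-integrability from the energy bound'': a uniform bound alone does not give equi-integrability --- the paper uses the \emph{convergence} of the nonnegative energies $\stress(\cdot,\strain_{\disc_m}(\bu_m)):\strain_{\disc_m}(\bu_m)$ together with their a.e.\ convergence and a Scheff\'e-type lemma to get $L^1$ convergence, hence equi-integrability, before applying Vitali --- but since you have already established that energy convergence in your Minty step, the argument closes in the same way.
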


\begin{remark}
If the sequence of Gradient Discretisations $(\disc_m)_{m\in\N}$
is \emph{compact} as defined in \cite{DRO12}, then the convergence of $\Pi_{\disc_m}\bu_m$
is strong even if the strict monotonicity \eqref{hyp:strictmonotone} is not satisfied.
\end{remark}

\begin{remark} Should the solution to \eqref{eq:elas-weak} be unique,
classical arguments also show that the convergences of $(\bu_m)_{m\in\N}$ in the senses
described in Theorem \ref{th:convnl} hold for the whole
sequence, not only for a subsequence.
\end{remark}

\begin{remark} We do not need to assume the existence of a solution
to the non-linear elasticity model \eqref{eq:elas-weak}.
The technique of convergence analysis we use establishes in fact this existence.
\end{remark}

\begin{remark} The strict monotonicity assumption \eqref{hyp:strictmonotone} is
satisfied by the Hencky-von Mises model (see \cite[Lemma 4.1]{BAR02}),
and by the damage model when the function $f$ defined in Remark \ref{rem:dam}
is such that $s\in[0,\infty)\to sf(s)$ is \emph{strictly} increasing.
\end{remark}

\begin{proof}
The proof follows the techniques used in \cite{DRO12} for the non-linear
elliptic problem with homogeneous Dirichlet boundary conditions. We adapt
those techniques to deal with the non-linear elasticity models with
mixed non-homogeneous boundary conditions.
In the following steps, we sometimes drop the index $m$ in $\disc_m$ to
simplify the notations.

\textbf{Step 1}: A priori estimates and existence of a solution to the scheme.

Let us take a scalar product $\langle\cdot,\cdot\rangle$ on $\X{\disc}$,
with associated norm $N(\cdot)$, and let us define
$T:\X{\disc}\to \X{\disc}$ and $L\in \X{\disc}$ by: for all $\bw,\bv\in \X{\disc}$,
\[
\langle T(\bw),\bv\rangle=\int_\Omega \stress(x,\strain_\disc(\bw)(x)):
\strain_\disc(\bv)(x)\d x
\]
and
\[
\langle L,\bv\rangle= \int_\Omega \bF(x)\cdot\Pi_\disc\bv(x)\d x+\int_{\Gn}\bg(x)\cdot\T_\disc(\bv)(x)\d S(x).
\]
Then Assumption \eqref{hyp:stress} ensures that $T$ is continuous
and that 
\begin{equation}\label{prnl:est1}
\langle T(\bw),\bw\rangle \ge
\sigma_*||\strain_\disc(\bw)||_{\L^2(\Omega)^{d}}^2
\ge\sigma_* K_{\disc}^{-2} ||\bw||_\disc^2.
\end{equation}
Since all norms are equivalent on $\X{\disc}$, we also have $||\bw||_\disc\ge
m_\disc N(\bw)$ for some $m_\disc>0$ and this shows that
$\lim_{N(\bw)\to\infty}\frac{\langle T(\bw),\bw\rangle}{N(\bw)}=+\infty$.
By \cite[Theorem 3.3 (p.19)]{DEI85}, we see that $T$ is onto and therefore
that there exists $\bu\in \X{\disc}$ such that $T(\bu)=L$, which precisely
states that $\bu$ is a solution to \eqref{grad-scheme}.

{}From \eqref{prnl:est1} and the definition \eqref{def:Cdisc} of $C_\disc$,
we also deduce that $\bu$ satisfies
\begin{multline*}
||\bu||_\disc^2 \le \frac{K_\disc^2}{\sigma_*}\langle T(\bu),\bu\rangle
=\frac{K_\disc^2}{\sigma_*}\langle L,\bu\rangle\\
\le \frac{K_\disc^2}{\sigma_*}||\bF||_{\L^2(\Omega)}||\Pi_\disc\bu||_{\L^2(\Omega)}
+\frac{K_\disc^2}{\sigma_*}||\bg||_{\L^2(\Gn)}||\T_\disc\bu||_{\L^2(\Gn)}\\
\le \left(\frac{C_\disc K_\disc^2}{\sigma_*}||\bF||_{\L^2(\Omega)}
+\frac{C_\disc K_\disc^2}{\sigma_*}||\bg||_{\L^2(\Gn)}\right)||\bu||_\disc,
\end{multline*}
that is to say
\be\label{prnl:est2}
||\bu||_\disc
\le \frac{C_\disc K_\disc^2}{\sigma_*}||\bF||_{\L^2(\Omega)}
+\frac{C_\disc K_\disc^2}{\sigma_*}||\bg||_{\L^2(\Gn)}.
\ee

\medskip

\textbf{Step 2}: Weak convergences.

By Estimate \eqref{prnl:est2}, $(||\bu_m||_{\disc_m})_{m\in\N}$
is bounded and Lemma \ref{lem:weakconv} below therefore shows that
there exists $\buex\in \H^1_{\Gd}(\Omega)$ such that, up to a subsequence,
\be\label{prnl:conv}
\ba
\dsp \Pi_{\disc_m}\bu_m\to \buex\mbox{ weakly in $\L^2(\Omega)$}\,,\\
\dsp \nabla_{\disc_m}\bu_m\to \nabla\buex\mbox{ weakly in $\L^2(\Omega)^{d}$ and}\\
\dsp \T_{\disc_m}\bu_m\to \gamma(\buex)\mbox{ weakly in $\L^2(\Gn)$.}\\
\ea
\ee

Let us now prove that $\buex$ is a solution to \eqref{eq:elas-weak}.
Assumptions \eqref{hyp:stress} and the bound on $\nabla_{\disc_m}\bu_m$
shows that $(\stress(\cdot,\strain_{\disc_m}(\bu_m))_{m\in\N}$ is symmetric-valued and bounded
in $\L^2(\Omega)^{d}$. There exists therefore a symmetric-valued $\bL\in \L^2(\Omega)^{d}$
such that, up to a subsequence,
\be\label{prnl:conv1}
\stress(\cdot,\strain_{\disc_m}(\bu_m))\to \bL\mbox{ weakly in $\L^2(\Omega)^{d}$}.
\ee

Let $\bvarphi\in \H^1_{\Gd}(\Omega)$. Then $P_{\disc_m}\bvarphi$
defined by \eqref{def:PD} belongs to $\X{\disc_m}$ and, by consistency of $(\disc_m)_{m\in\N}$,
$\Pi_{\disc_m}(P_{\disc_m}\bvarphi)\to \bvarphi$ strongly in
$\L^2(\Omega)$ and $\nabla_{\disc_m}(P_{\disc_m}\bvarphi)
\to \nabla\bvarphi$ strongly in $\L^2(\Omega)^{d}$. By Lemma
\ref{lem:weakconv}, we also deduce that $\T_{\disc_m}(P_{\disc_m}\bvarphi)
\to \gamma(\bvarphi)$ weakly in $\L^2(\Gn)$.
The convergence \eqref{prnl:conv1} then allows to pass to the limit
in \eqref{grad-scheme} with $\bv=P_{\disc_m}\bvarphi$ as a test function and
we obtain
\be\label{prnl:conv2}
\int_\Omega \bL(x):\strain(\bvarphi)(x)\d x=\int_\Omega \bF(x)\cdot\bvarphi(x)\d x
+\int_{\Gn}\bg(x)\cdot\gamma(\bvarphi)(x)\d S(x).
\ee

We now use the monotonicity assumption on $\stress$ and Minty's trick \cite{MIN63,LER65}
to prove that $\bL=\stress(x,\strain(\buex))$.
We first notice that, plugging $\bv=\bu_m$ in \eqref{grad-scheme}
and using \eqref{prnl:conv} and \eqref{prnl:conv2},
\begin{multline}
\label{prnl:conv3}
\int_\Omega \stress(x,\strain_{\disc_m}(\bu_m)(x)):\strain_{\disc_m}(\bu_m)(x)\d x\\
=\int_\Omega \bF(x)\cdot\Pi_{\disc_m}\bu_m(x)\d x+\int_{\Gn}\bg(x)\cdot \T_{\disc_m}\bu_m(x)\d S(x)\\
\longrightarrow\int_\Omega \bF(x)\cdot \buex(x)\d x+\int_{\Gn}\bg(x)\cdot \gamma(\buex)(x)\d S(x)
=\int_\Omega \bL(x):\strain(\buex)(x)\d x.
\end{multline}
Let us now take any symmetric-valued $\bG\in \L^2(\Omega)^{d}$. The monotonicity of $\stress$ shows that
\[
A_m:=\int_\Omega \big[\stress(x,\strain_{\disc_m}(\bu_m)(x))-
\stress(x,\bG(x))\big]:\big[\strain_{\disc_m}(\bu_m)(x)-\bG(x)\big]\d x\ge 0.
\]
After developing $A_m$, we can use \eqref{prnl:conv}, \eqref{prnl:conv1} and \eqref{prnl:conv3}
to pass to the limit and we find
\begin{equation}\label{prnl:conv24}
\lim_{m\to\infty}A_m=
\int_\Omega \big[\bL(x)-\stress(x,\bG(x))\big]
:\big[\strain(\buex)(x)-\bG(x)\big]\d x\ge 0.
\end{equation}
The Minty trick then concludes the proof. Applying this inequality to
$\bG=\strain(\buex)+\alpha\bD$ for some symmetric-valued $\bD\in \L^2(\Omega)^{d}$,
dividing by $\alpha$ and letting $\alpha\to 0^{\pm}$ (thanks to Assumption \eqref{hyp:stress}), we 
obtain 
\[
\int_\Omega \big[\bL(x)-\stress(x,\strain(\buex)(x))\big]
:\bD(x)\d x=0,
\]
which proves, with $\bD=\bL-\stress(\cdot,\strain(\buex))$, that
\be\label{prnl:ident}
\bL=\stress(\cdot,\strain(\buex)).
\ee
Together with \eqref{prnl:conv2} this shows that $\buex$ satisfies
\eqref{eq:elas-weak}.

\medskip

\textbf{Step 3}: Strong convergences under strict monotonicity.

We now assume that \eqref{hyp:strictmonotone} holds and we
first prove the strong convergence of the strain tensors.
We define
\[
f_m=\big[\stress(\cdot,\strain_{\disc_m}(\bu_m))-
\stress(\cdot,\strain(\buex))\big]\\
:\big[\strain_{\disc_m}(\bu_m)-\strain(\buex)\big].
\]
The function $f_m$ is non-negative and, by \eqref{prnl:conv24}
with $\bG=\strain(\buex)$ and the identity \eqref{prnl:ident}, we see that
$\lim_{m\to\infty}\int_\Omega f_m(x)\d x=0$. $(f_m)_{m\in\N}$ thus converges
to $0$ in $L^1(\Omega)$, and therefore also a.e. on $\Omega$ up to a subsequence.

Let us take $x\in\Omega$ such that the above mentioned convergence hold at $x$.
{}From the coercivity and growth of $\stress$, developing the
products in $f_m(x)$ gives
\begin{multline*}
f_m(x)\ge \sigma_*|\strain_{\disc_m}(\bu_m)(x)|^2
-2\sigma^*|\strain_{\disc_m}(\bu_m)(x)|\, |\strain(\buex)(x)|\\
-|\sigma(x,\strain(\buex)(x)|\,|\strain(\buex)(x)|.
\end{multline*}
Since the right-hand side is quadratic in $|\strain_{\disc_m}(\bu_m)(x)|$
and $(f_m(x))_{m\in\N}$ is bounded,
we deduce that the sequence $(\strain_{\disc_m}(\bu_m)(x))_{m\in\N}$
is bounded. If $\mathbf{L}_x$ is one of its adherence values then, by passing to the limit
in the definition of $f_m(x)$, we see that
\[
0=\big[\stress(x,\mathbf{L}_x)-
\stress(x,\strain(\buex)(x))\big]
:\big[\mathbf{L}_x-\strain(\buex)(x)\big].
\]
By \eqref{hyp:strictmonotone}, this forces $\mathbf{L}_x=\strain(\buex)(x)$.
The bounded sequence $(\strain_{\disc_m}(\bu_m)(x))_{m\in\N}$
only has $\strain(\buex)(x)$ as adherence value and therefore
converges in whole to this value. We have therefore established that
$\strain_{\disc_m}(\bu_m)\to \strain(\bu)$ a.e. on $\Omega$.

Using then \eqref{prnl:conv3} and \eqref{prnl:ident} and
defining
\[
F_m=\stress(\cdot,\strain_{\disc_m}(\bu_m)):\strain_{\disc_m}(\bu_m)\ge 0,
\]
we see that
\[
\lim_{m\to\infty}\int_\Omega F_m(x)\d x=
\int_\Omega \stress(x,\strain(\buex)(x)):\strain(\buex)(x)\d x.
\]
But since $F_m\to \stress(\cdot,\strain(\buex)):\strain(\buex)$ a.e.
on $\Omega$ and is non-negative, we can apply Lemma \ref{lem:convpositive}
below to deduce that $(F_m)_{m\in\N}$ converges in $L^1(\Omega)$.
This sequence is therefore equi-integrable in $L^1(\Omega)$ and,
by the coercivity property of $\stress$, this proves that
$(\strain_{\disc_m}(\bu_m))_{m\in\N}$ is
equi-integrable in $\L^2(\Omega)^{d}$. As this sequence converges
a.e. on $\Omega$ to $\strain(\buex)$, Vitali's theorem shows that
\be\label{prnl:conv5}
\strain_{\disc_m}(\bu_m)\to \strain(\buex)\mbox{ strongly in $\L^2(\Omega)^{d}$}.
\ee
We then consider $P_{\disc_m}\buex\in \X{\disc_m}$ and write,
by definition \eqref{def:Kdisc} of $K_\disc$,
\[
||\nabla_{\disc_m}\bu_m-\nabla_{\disc_m}(P_{\disc_m}\buex)||_{\L^2(\Omega)^{d}}
\le K_{\disc_m}||\strain_{\disc_m}(\bu_m)-\strain_{\disc_m}(P_{\disc_m}\buex)||_{\L^2(\Omega)^{d}}.
\]
Since $\nabla_{\disc_m}(P_{\disc_m}\buex)$ and $\strain_{\disc_m}(P_{\disc_m}\buex)$
strongly converge in $\L^2(\Omega)^d$ to $\nabla\buex$ and $\strain(\buex)$,
we can pass to the limit in this estimate by using the coercivity of
$(\disc_m)_{m\in\N}$ and \eqref{prnl:conv5} and we deduce that
$\nabla_{\disc_m}\bu_m\to \nabla\buex$ strongly in $\L^2(\Omega)^{d}$.
The definition \eqref{def:Cdisc} of $C_{\disc_m}$ then gives
\[
||\Pi_{\disc_m}\bu_m-\Pi_{\disc_m}(P_{\disc_m}\buex)||_{\L^2(\Omega)}
\le C_{\disc_m}
||\nabla_{\disc_m}\bu_m-\nabla_{\disc_m}(P_{\disc_m}\buex)||_{\L^2(\Omega)^{d}}
\]
and, since $\Pi_{\disc_m}(P_{\disc_m}\buex)\to \buex$ strongly in $\L^2(\Omega)$,
passing to the limit in this estimate proves the strong
convergence in $\L^2(\Omega)^d$ of $\Pi_{\disc_m}\bu_m$ to $\buex$. \end{proof}

\begin{remark} We saw in the proof that $\T_{\disc_m}\bu_m\to \gamma(\buex)$
weakly in $\L^2(\Gn)$. If the interpolation $P_\disc$ defined by
\eqref{def:PD} satisfies, for any $\bvarphi\in \H^1_{\Gd}(\Omega)$,
$\T_{\disc_m}(P_{\disc_m}\bvarphi)\to \gamma(\bvarphi)$ strongly in $\L^2(\Gn)$ as $m\to\infty$,
the same reasoning as the one used at the end of the proof shows
that, in case of strict monotonicity of $\stress$,
$\T_{\disc_m}\bu_m\to \gamma(\buex)$ strongly in $\L^2(\Gn)$.
\end{remark}

\begin{lemma}\label{lem:weakconv}
Let $(\disc_m)_{m\in\N}$ be a sequence of Gradient Discretizations
in the sense of Definition \ref{def:grad-disc}, which is
limit-conforming (Definition \ref{def:lim-conf}) and coercive (Definition \ref{def:coer}).
For any $m\in\N$ we take $\bv_m\in \X{\disc_m}$.

If $(||\bv_m||_{\disc_m})_{m\in\N}$ is
bounded then there exists $\bv\in \H^1_{\Gd}(\Omega)$ such that, up
to a subsequence, $\Pi_{\disc_m}\bv_m\to \bv$ weakly in $\L^2(\Omega)$,
$\nabla_{\disc_m}\bv_m\to \nabla\bv$ weakly in $\L^2(\Omega)^d$
and $\T_{\disc_m}\bv_m\to \gamma(\bv)$ weakly in $\L^2(\Gn)$.
\end{lemma}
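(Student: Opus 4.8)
The plan is to combine weak compactness in the relevant Hilbert spaces with the limit-conformity property, the latter playing the role of a discrete integration by parts that lets us identify the weak limits. First I would check that the three sequences of interest are bounded. By hypothesis $\|\bv_m\|_{\disc_m}=\|\nabla_{\disc_m}\bv_m\|_{\L^2(\Omega)^d}$ is bounded; coercivity gives $C_{\disc_m}\le C_P$, so the discrete Poincar\'e inequality \eqref{ineq:poincare} and the definition \eqref{def:Cdisc} of $C_\disc$ also bound $\|\Pi_{\disc_m}\bv_m\|_{\L^2(\Omega)}$ and $\|\T_{\disc_m}\bv_m\|_{\L^2(\Gn)}$. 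Since $\L^2(\Omega)$, $\L^2(\Omega)^d$ and $\L^2(\Gn)$ are Hilbert spaces, up to a single (diagonally extracted) subsequence there exist $\bv\in\L^2(\Omega)$, $\bG\in\L^2(\Omega)^d$ and $\bw\in\L^2(\Gn)$ with $\Pi_{\disc_m}\bv_m\rightharpoonup\bv$, $\nabla_{\disc_m}\bv_m\rightharpoonup\bG$ and $\T_{\disc_m}\bv_m\rightharpoonup\bw$ weakly. It then remains to prove that $\bv\in\H^1_{\Gd}(\Omega)$, that $\bG=\nabla\bv$, and that $\bw=\gamma(\bv)$ on $\Gn$.

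The identification comes from writing the defining inequality of $W_\disc$ in \eqref{def:wdisc} for each fixed test tensor $\bL\in\H_{\div}(\Omega,\Gn)^d$:
\[
\Bigl|\int_\Omega\bigl(\nabla_{\disc_m}\bv_m:\bL+\Pi_{\disc_m}\bv_m\cdot\div\bL\bigr)\,\d x-\int_{\Gn}\gamma_\n(\bL)\cdot\T_{\disc_m}\bv_m\,\d S\Bigr|\le W_{\disc_m}(\bL)\,\|\bv_m\|_{\disc_m}.
\]
The right-hand side tends to $0$ since $(\disc_m)$ is limit-conforming, $W_{\disc_m}(\bL)\to 0$, while $\|\bv_m\|_{\disc_m}$ stays bounded; the left-hand side converges because $\bL$, $\div\bL$ and $\gamma_\n(\bL)$ are fixed $\L^2$ functions tested against weakly convergent sequences. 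Passing to the limit yields
\[
\int_\Omega\bigl(\bG:\bL+\bv\cdot\div\bL\bigr)\,\d x=\int_{\Gn}\gamma_\n(\bL)\cdot\bw\,\d S\qquad\text{for all }\bL\in\H_{\div}(\Omega,\Gn)^d.
\]

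From this identity I would extract the conclusions in two stages. Taking $\bL$ to be an arbitrary smooth tensor field compactly supported in $\Omega$ annihilates the boundary term, and the $\div\bL$ term, integrated by parts against the distributional gradient of $\bv$, gives $\int_\Omega\bG:\bL=\int_\Omega\nabla\bv:\bL$ in the sense of distributions; hence $\bG=\nabla\bv$ and in particular $\bv\in\H^1(\Omega)$. With $\bv$ now known to be $\H^1$, I would use the Green formula $\int_\Omega(\nabla\bv:\bL+\bv\cdot\div\bL)\,\d x=\int_{\partial\Omega}\gamma(\bv)\cdot\gamma_\n(\bL)\,\d S$ and subtract the limit identity to obtain $\int_{\partial\Omega}\gamma(\bv)\cdot\gamma_\n(\bL)\,\d S=\int_{\Gn}\bw\cdot\gamma_\n(\bL)\,\d S$ for every admissible $\bL$. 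The last step is to realise every $\boldsymbol\psi\in\L^2(\partial\Omega)$ as $\gamma_\n(\bL)$ for some $\bL\in\H_{\div}(\Omega,\Gn)^d$ (for instance by solving the component-wise auxiliary Neumann problems $-\Delta w_i+w_i=0$ in $\Omega$, $\partial_\n w_i=\psi_i$ on $\partial\Omega$, and taking the rows of $\bL$ to be $\nabla w_i$). Choosing $\boldsymbol\psi$ supported in $\Gd$, respectively in $\Gn$, then forces $\gamma(\bv)=0$ a.e.\ on $\Gd$, i.e.\ $\bv\in\H^1_{\Gd}(\Omega)$, and $\gamma(\bv)=\bw$ a.e.\ on $\Gn$.

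I expect this final trace identification to be the delicate point: on the Dirichlet part $\Gd$ the normal trace $\gamma_\n(\bL)$ is a priori only an $\H^{-1/2}$ object, and the two boundary pieces must be decoupled across their interface while $\Omega$ is merely Lipschitz. The auxiliary-Neumann-problem construction is precisely what makes this clean, since it produces admissible test tensors with prescribed $\L^2$ normal trace on all of $\partial\Omega$, independently of the regularity of $\Omega$; by contrast the boundedness argument and the weak-limit passage in the limit-conformity identity are entirely routine.
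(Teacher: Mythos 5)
Your proposal is correct and follows essentially the same path as the paper: boundedness via coercivity, weak compactness, passage to the limit in the limit-conformity inequality, identification of the distributional gradient with compactly supported test tensors, and then Green's formula to identify the traces. The only (harmless) difference is at the very last step, where the paper invokes the density of $\{\gamma_\n(\bL)\,:\,\bL\in\H^1(\Omega)^{d}\}$ in $\L^2(\partial\Omega)$, whereas you realise every $\L^2(\partial\Omega)$ datum exactly as a normal trace by solving auxiliary Neumann problems --- a constructive substitute for the same density fact.
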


\begin{proof}
The coercivity of $(\disc_m)_{m\in\N}$ and the bound on $||\bv_m||_{\disc_m}$
show that the sequences $||\Pi_{\disc_m}\bv_m||_{\L^2(\Omega)}$, $||\nabla_{\disc_m}\bv_m||_{\L^2(\Omega)^d}$
and $||\T_{\disc_m}\bv_m||_{\L^2(\Gn)}$ remain bounded. There exists therefore
$\bv\in \L^2(\Omega)$, $\bG\in \L^2(\Omega)^d$ and $\bw\in \L^2(\Gn)$ such that,
up to a subsequence,
\be\label{lem:weakconv1}
\ba
\dsp \Pi_{\disc_m}\bv_m\to \bv\mbox{ weakly in $\L^2(\Omega)$}\,,\quad
\nabla_{\disc_m}\bv_m\to \bG\mbox{ weakly in $\L^2(\Omega)^{d}$ and}\\
\dsp \T_{\disc_m}\bv_m\to \bw\mbox{ weakly in $\L^2(\Gn)$.}\\
\ea
\ee
These convergences and the limit-conformity
of $(\disc_m)_{m\in\N}$ show that, for any $\bL\in \H_{\div}(\Omega,\Gn)^d$,
\begin{multline*}
\left|\int_\Omega \bG(x):\bL(x)+\bv(x)\cdot\div(\bL)(x)\d x - \int_{\Gn}\gamma_\n(\bL)(x)\cdot\bw(x)\d S(x)\right|\\
=\lim_{m\to\infty}
\left|\int_\Omega \nabla_{\disc_m}\bv_m(x):\bL(x)+\Pi_{\disc_m}\bv_m(x)\cdot\div(\bL)(x)\d x\right.\\
\left. - \int_{\Gn}\gamma_\n(\bL)(x)\cdot \T_{\disc_m}(\bv_m)(x)\d S(x)\right|\\
\le \lim_{m\to\infty}\big[||\bv_m||_{\disc_m}W_{\disc_m}(\bL)\big]=0.
\end{multline*}
Hence, for any $\bL\in \H_{\div}(\Omega,\Gn)^d$,
\begin{equation}\label{lem:weakconv2}
\int_\Omega \bG(x):\bL(x)+\bv(x)\cdot\div(\bL)(x)\d x- \int_{\Gn}\gamma_\n(\bL)(x)\cdot\bw(x)\d S(x)=0.
\end{equation}
Applied with $\bL\in C^\infty_c(\Omega)^{d\times d}$, this relation shows that
\be\label{lem:weakconv3}
\nabla \bv=\bG\mbox{ in the sense of distributions on $\Omega$},
\ee
and thus that $\bv\in \H^1(\Omega)$. By using \eqref{lem:weakconv2} with $\bL\in \H^1(\Omega)^{d}\subset
\H_{\div}(\Omega,\Gn)^d$ and by integrating by parts, we obtain
\[
\int_{\partial\Omega} \gamma_\n(\bL)(x)\cdot \gamma(\bv)(x)\d S(x)
- \int_{\Gn}\gamma_\n(\bL)(x)\cdot\bw(x)\d S(x)=0.
\]
As the set $\{\gamma_\n(\bL)\,:\,\bL\in \H^1(\Omega)^{d}\}$ is dense
in $\L^2(\partial\Omega)$, we deduce from this that $\gamma(\bv)=0$ on $\Gd$ and that
\be\label{lem:weakconv4}
\gamma(\bv)=\bw\mbox{ on $\Gn$.}
\ee
Thus, $\bv\in \H^1_{\Gd}$ and \eqref{lem:weakconv1}, \eqref{lem:weakconv3} and \eqref{lem:weakconv4}
conclude the proof. \end{proof}

The proof of the following lemma is classical \cite{DRO06,EYM09}.

\begin{lemma}\label{lem:convpositive} Let $(F_m)_{m\in\N}$ be a sequence
of non-negative measurable functions on $\Omega$ which converges a.e. on $\Omega$
to $F$ and such that $\int_\Omega F_m(x)\d x\to \int_\Omega F(x)\d x$.
Then $F_m\to F$ in $L^1(\Omega)$.
\end{lemma}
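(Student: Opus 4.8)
The plan is to reduce the $L^1$-convergence to a dominated-convergence argument applied to the negative part of $F_m-F$; this is the classical Scheffé-type manoeuvre. First I would record the two preliminary facts. Since each $F_m\ge 0$ and $F_m\to F$ a.e., the limit satisfies $F\ge 0$ a.e.\ on $\Omega$. Moreover $F\in L^1(\Omega)$: Fatou's lemma gives
\[
\int_\Omega F(x)\,\d x \le \liminf_{m\to\infty}\int_\Omega F_m(x)\,\d x,
\]
and the right-hand side is finite because, by hypothesis, $\int_\Omega F_m\,\d x$ converges.

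The heart of the argument is the elementary pointwise identity
\[
|F_m-F| = (F_m-F) + 2(F-F_m)^+,
\]
where $(\cdot)^+$ denotes the positive part; this is verified by treating separately the cases $F_m\ge F$ and $F_m< F$. Integrating over $\Omega$ yields
\[
\int_\Omega |F_m-F|\,\d x = \left(\int_\Omega F_m(x)\,\d x - \int_\Omega F(x)\,\d x\right) + 2\int_\Omega (F-F_m)^+(x)\,\d x.
\]
The bracketed term tends to $0$ by the assumption $\int_\Omega F_m\,\d x\to\int_\Omega F\,\d x$, so the whole matter reduces to showing that $\int_\Omega (F-F_m)^+\,\d x\to 0$.

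For this remaining term I would apply the dominated convergence theorem. Because $F_m\ge 0$, we have the $m$-independent bound $0\le (F-F_m)^+\le F$, and $F\in L^1(\Omega)$ furnishes the required integrable dominating function; furthermore $(F-F_m)^+\to 0$ a.e.\ on $\Omega$ since $F_m\to F$ a.e. Hence $\int_\Omega (F-F_m)^+\,\d x\to 0$, and combining this with the convergence of the signed term gives $\int_\Omega|F_m-F|\,\d x\to 0$, which is the claim.

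There is no genuine obstacle here, as the result is classical; the only point meriting a moment's care is the choice of dominating function. One cannot dominate $|F_m-F|$ directly — the family $(F_m)$ need not be uniformly integrable a priori, and this is precisely the conclusion one is trying to reach — so the proof must isolate the negative part $(F-F_m)^+$, which is automatically controlled by $F$ thanks to the non-negativity of the $F_m$, while the convergence of the integrals disposes of the unsigned remainder $\int_\Omega(F_m-F)\,\d x$.
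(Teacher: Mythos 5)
Your proof is correct and complete: the decomposition $|F_m-F|=(F_m-F)+2(F-F_m)^+$ together with domination of $(F-F_m)^+$ by $F\in L^1(\Omega)$ is exactly the standard Scheff\'e-type argument. The paper itself does not present a proof of this lemma (it only notes that the proof is classical and cites references), and your argument is precisely the classical one those references contain, so there is nothing to compare beyond noting agreement.
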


\section{Examples of Gradient Schemes}\label{sec:ex}

In all the following examples, we assume that 
$\Gd$ has non-zero measure and is such that a K\"orn's inequality holds on $\H^1_{\Gd}(\O)$
\cite{BS94,Cia88}. This is actually a necessary condition for \emph{coercive}
and \emph{consistent} sequences of Gradient Discretisations to exist.

\subsection{Standard displacement-based formulation} 

All (conforming) Galerkin methods are Gradient Schemes.
If $(\bV_n)_{n\in\N}$ is a sequence of finite dimensional subspaces
of $\H^1_{\Gd}(\O)$ such that $\cup_{n\ge 1}\bV_n$ is dense in $\H^1_{\Gd}(\O)$,
then by letting $X_{\disc_n}=V_n$,  $\Pi_{\disc_n}={\rm Id}$,
$\T_{\disc_n}=\gamma$ and $\nabla_{\disc_n}=\nabla$, we obtain
a sequence of Gradient Discretisations whose corresponding Gradient Schemes
are Galerkin approximations of \eqref{eq:elas-strong}. This
sequence of Gradient Discretisations is obviously \emph{consistent}
(this is $\overline{\cup_{n\in\N}\bV_n}=\H^1_{\Gd}(\O)$),
\emph{limit-conforming} (as it is a conforming approximation,
$W_{\disc_n}=0$ for any $n$) and \emph{coercive} (since Poincar\'e's and
K\"orn's inequalities hold in $\H^1_{\Gd}(\O)$).

This is in particular the case for conforming Finite Element approximations
based on spaces $\bV_h$ built on quasi-uniform partitions $\CT_h$ of $\O$
(made of quadrilaterals, hexahedra or simplices \cite{Bra01,QV94}).

But non-conforming methods are also included in the framework of Gradient Schemes.
For example, the Crouzeix-Raviart scheme falls in this framework, with the discrete gradient
defined as the classical ``broken gradient''. Consistency, limit-conformity
and the Poincar\'e's inequality for this scheme are established in \cite{DRO13},
and it is known that if $\Gd=\partial\O$ then a uniform K\"orn's inequality
holds. This inequality fails for general $\Gd$ \cite{FAL90} but
it is satisfied for higher order non-conforming methods (whose continuity conditions
through the edges involve both the zero-th and
first order moments) \cite{KNO00}. The consistency, limit-conformity and
Poincar\'e's inequality for such methods can be easily established as for
Crouzeix-Raviart's method.

\subsection{Stabilised nodal strain formulation} 

We consider a nodal strain formulation as presented in \cite{FB81,PS06,Lam09p}
and built on a conforming Finite Element space $\bV_h$.
Associated with the primal mesh $\CT_h$ we let 
$\CT^*_h$ be the dual mesh consisting of dual volumes, 
where a dual volume is associated with a 
vertex of $\CT_h^*$ and is constructed as follows.
Let $ \{T^{\x_i}_j\}_{j=1}^{M_i}\subset \CT_h$ be the set of all elements touching 
the vertex $\x_i$, and $ \{E^{\x_i}_j\}_{j=1}^{N_i}$ the set of 
edges or faces touching $\x_i$. Then the dual volume 
associated with the vertex $\x_i$ is the polygonal 
or polyhedral region joining all the bary-centres of 
$ \{T^{\x_i}_j\}_{j=1}^{M_i}$ 
and $ \{E^{\x_i}_j\}_{j=1}^{N_i}$ . Let $S^*_h$ be the 
space of vector-valued piecewise constant functions with respect to the dual mesh $\CT_h^*$.

Defining the linear form 
\[
\ell(\bv_h) = \int_\Omega \bF(x)\cdot\bv_h(x)\d x +\int_{\Gn}\bg(x)\cdot\gamma(\bv_h)(x)\d S(x),
\]
the stabilised nodal strain formulation, for a constant stiffness
tensor $\stiff$, is to find $\bu_h\in \bV_h$ such that, for any $\bv_h\in \bV_h$,
\[
\int_{\Omega} \Pi^*_h \strain(\bu_h)(x):\stiff \strain(\bv_h)(x)\d x+
\int_{\Omega} \stab(\strain(\bu_h)-\Pi^*_h\strain(\bu_h))(x):
\strain(\bv_h)(x)\d x =\ell(\bv_h)
\]
where $\Pi^*_h$ is the orthogonal projection onto $S^*_h$
and $\stab$ is a constant stabilisation (symmetric positive definite) tensor. By the properties of the
orthogonal projection and since $\stiff$ and $\stab$ are constant, this can be recast as
\be\label{stab-nodal}
\begin{array}{l}
\dsp \mbox{Find $\bu_h\in \bV_h$ such that,
$\forall \bv_h\in \bV_h$,}\\[0.5em]
\ba \dsp \int_{\Omega} \stiff \Pi^*_h \strain(\bu_h)(x):\Pi^*_h\strain(\bv_h)(x)\d x\\
\dsp\qquad\qquad+
 \int_{\Omega} \stab(\strain(\bu_h)-\Pi^*_h\strain(\bu_h))(x):
(\strain(\bv_h)-\Pi^*_h\strain(\bv_h))(x)\d x  =\ell(\bv_h).
\ea
\ea\ee
We will take this formulation as definition of the stabilised nodal strain
formulation in the case where $\stiff$ and $\stab$ are not constant (in which
case we assume that $\stab$ satisfies Assumption \eqref{hyp:stiff}).

Let us now construct a Gradient Discretisation $\disc=(\X{\disc},\Pi_\disc,
\T_\disc,\nabla_\disc)$ such that this formulation is identical to
the corresponding Gradient Scheme \eqref{grad-scheme:lin}.
We start by defining $\XD{\disc}$ and the operators $\Pi_\disc:\XD{\disc}\to \L^2(\O)$
and $\T_\disc:\XD{\disc}\to \L^2(\Gn)$ by
\begin{equation}\label{def:gsnodal}
\X{\disc}=\bV_h\,,\;\Pi_\disc \bv_h=\bv_h\mbox{ and }
\T_\disc \bv_h=\gamma(\bv_h)_{|\Gn}\mbox{ for all $\bv_h\in \XD{\disc}$}.
\end{equation}
With these choices, $\ell(\bv_h)$ is the right-hand side of \eqref{grad-scheme:lin}
and we therefore just need to find a discrete gradient $\nabla_\disc$
such that the left-hand side of \eqref{grad-scheme:lin} is equal
to the left-hand side of \eqref{stab-nodal}. 

We first notice that, by \eqref{hyp:stiff} on $\stiff$ and $\stab$, for a.e. $x$
the linear mappings $\stiff(x),\stab(x):\R^{d\times d}\to\R^{d\times d}$ are symmetric
positive definite  with respect to  the inner product ``$:$'' and thus
$\stiff(x)^{-1/2}$ and $\stab(x)^{1/2}$ make sense.
We can therefore define $\nabla_\disc:\X{\disc}\to \L^2(\O)^d$ by
\begin{equation}\label{def:gradnodal}
\nabla_\disc\bv_h = \Pi^*_h \nabla\bv_h + \stiff^{-1/2}\stab^{1/2}(\nabla\bv_h-\Pi^*_h\nabla\bv_h).
\end{equation}
By assumptions on $\stiff$ and $\stab$ and Lemma \ref{lem:root}, this gives
\[
\strain_\disc(\bv_h)=\Pi^*_h \strain(\bv_h) + \stiff^{-1/2}\stab^{1/2}(\strain(\bv_h)-\Pi^*_h\strain(\bv_h)).
\]
Assuming that $\stiff$ and $\stab$ are piecewise constant on $\CT_h^*$, we can then compute
\begin{eqnarray}
\lefteqn{\int_\Omega\stiff(x) \strain_\disc(\bu_h)(x):\strain_\disc(\bv_h)(x)\d x}&&\nonumber\\
&=&\int_\Omega \stiff(x)\Pi^*_h \strain(\bu_h)(x):\Pi^*_h \strain(\bv_h)(x)\d x\nonumber\\
&&+\int_\Omega \stiff(x)\Pi^*_h \strain(\bu_h)(x):\stiff^{-1/2}(x)\stab^{1/2}(x)
(\strain(\bv_h)(x)-\Pi^*_h \strain(\bv_h)(x)) \d x \label{T2}\\
&&+\int_\Omega \stiff(x)\stiff^{-1/2}(x)\stab^{1/2}(x)
(\strain(\bu_h)(x)-\Pi^*_h \strain(\bu_h)(x)):\Pi^*_h\strain(\bv_h)(x)\d x\label{T3}\\
&&+\int_\Omega \stiff(x)\stiff^{-1/2}(x)\stab^{1/2}(x)
(\strain(\bu_h)(x)-\Pi^*_h \strain(\bu_h)(x))\nonumber\\
&&\qquad\qquad:\stiff^{-1/2}(x)\stab^{1/2}(x)
(\strain(\bv_h)(x)-\Pi^*_h \strain(\bv_h)(x)) \d x.
\nonumber
\end{eqnarray}
But, since $\stiff$, $\stab$ and $\Pi^*_h\strain(\bu_h)$ are constant on each cell in $\CT^*_h$
and since 
\[ \Pi_h^*\strain(\bv_h)=\frac{1}{{\rm meas}(K)}\int_K \strain(\bv_h)(x)\d x\]
on $K\in \CT_h^*$, we have
\[
\eqref{T2}=\sum_{K\in\CT_h^*}\stiff_{|K} 
\Pi^*_h\strain(\bu_h)_{|K}:\stiff^{-1/2}_{|K}\stab^{1/2}_{|K} 
\int_K \left(\strain(\bv_h)(x)-\Pi^*_h\strain(\bv_h)(x)\right)\d x=0.
\]
Similarly, \eqref{T3} vanishes and, by using the symmetry of $\stiff$ and $\stab$, we end up with
\begin{eqnarray*}
\lefteqn{\int_\Omega\stiff(x) \strain_\disc(\bu_h)(x):\strain_\disc(\bv_h)(x)\d x}&&\\
&=&\int_\Omega \stiff(x)\Pi^*_h \strain(\bu_h)(x):\Pi^*_h \strain(\bv_h)(x)\\
&&+ \int_\Omega \stab(x)(\strain(\bu_h)(x)-\Pi^*_h \strain(\bu_h)(x))
:(\strain(\bv_h)(x)-\Pi^*_h \strain(\bv_h)(x)) \d x,
\end{eqnarray*}
which precisely states that the left-hand sides of \eqref{grad-scheme:lin}
and \eqref{stab-nodal} coincide. 
Thus, under the assumption that $\stiff$ and $\stab$ are piecewise constant on $\CT_h^*$,
the stabilised nodal strain formulation \eqref{stab-nodal} is the
Gradient Scheme, for the linear elasticity equation, corresponding to
the Gradient Discretisation defined by \eqref{def:gsnodal}--\eqref{def:gradnodal}.

\begin{remark}\label{ns:stiff} If $\stiff$ or $\stab$ are not piecewise constant on $\CT_h^*$,
then by replacing them with $\Pi_h^*\stiff$ and $\Pi_h^*\stab$
in the stabilised nodal strain formulation \eqref{stab-nodal} and the
definition \eqref{def:gradnodal} of the discrete gradient,
the stabilised nodal strain formulation is the Gradient Scheme 
\eqref{grad-scheme:lin} in which $\stiff$ is replaced with $\Pi_h^*\stiff$.
\end{remark}

\subsubsection{Consistency, limit-conformity and coercivity}\label{sec:propstabnodal}

Let us consider $(\bV_{h_n})_{n\in\N}$ a sequence of conforming Finite Element
spaces on meshes $(\CT_{h_n})_{n\in\N}$ with $h_n\to 0$. We prove here
that if $\disc_n$ is the Gradient Discretisation given by
\eqref{def:gsnodal}--\eqref{def:gradnodal} for $\bV_{h_n}$ then, under
the classical quasi-uniform assumptions on $(\CT_{h_n})_{n\in\N}$,
the sequence $(\disc_n)_{n\in\N}$ is consistent, limit-conforming and coercive. 
The key point is to notice that the definition \eqref{def:gradnodal} of the
discrete gradient can be recast as
\begin{equation}\label{def:gradnodal2}
\nabla_\disc\bv_h = \nabla\bv_h 
+ ( \stiff^{-1/2}\stab^{1/2}-{\rm Id})(\nabla\bv_h-\Pi^*_h\nabla\bv_h)
=\nabla\bv_h + \mathcal L_h \nabla\bv_h
\end{equation}
where $\mathcal L_h=(\stiff^{-1/2}\stab^{1/2}-{\rm Id})(\mbox{Id}-\Pi^*_h):\L^2(\O)^d
\to \L^2(\O)^d$ has a norm bounded independently on $h$ and converges pointwise
to $0$.

Let us first consider the consistency property. For any $\bvarphi\in \H^1_{\Gd}(\O)$,
by quasi-uniformity of the sequence of meshes, there exists $\bv_n\in \bV_{h_n}=\X{\disc_n}$
such that $\bv_n=\Pi_{\disc_n}\bv_n\to \bvarphi$ in $\L^2(\O)$ and
$\nabla \bv_n\to \nabla\bvarphi$ in $\L^2(\O)^d$.
We have
\[
||\mathcal L_{h_n}\nabla\bv_n||_{\L^2(\O)^d}\le
||\mathcal L_{h_n}||_{\L^2(\O)^d\to \L^2(\O)^d}||\nabla\bv_n-\nabla \varphi||_{\L^2(\O)^d}
+||\mathcal L_{h_n}\nabla\bvarphi||_{\L^2(\O)^d}
\]
and, by the properties of $\mathcal L_{h_n}$,
both terms in the right-hand side tend to $0$.
Combined with \eqref{def:gradnodal2} this
proves that $\nabla_{\disc_n}\bv_n\to \nabla\bvarphi$ in $\L^2(\O)^d$, which
concludes the proof of the consistency of $(\disc_n)_{n\in\N}$.

Coercivity follows from the following comparisons between $\nabla$, $\nabla_{\disc_n}$
and $\strain$, $\strain_{\disc_n}$: there exists $\ctel{cst1},\ctel{cst2}>0$ not depending on $n$ such that,
for any $\bv\in \bV_{h_n}=\X{\disc_n}$,
\begin{eqnarray}
\label{comp:grad}
&&\cter{cst1}||\nabla_{\disc_n}\bv||_{\L^2(\O)^d}\le ||\nabla\bv||_{\L^2(\O)^d}
\le \cter{cst2} ||\nabla_{\disc_n}\bv||_{\L^2(\O)^d}\,,\\
\label{comp:strain}
&&\cter{cst1}||\strain_{\disc_n}(\bv)||_{\L^2(\O)^d}\le ||\strain(\bv)||_{\L^2(\O)^d}
\le \cter{cst2} ||\strain_{\disc_n}(\bv)||_{\L^2(\O)^d}.
\end{eqnarray}
Indeed, with these two estimates, the coercivity of $(\disc_n)_{n\in\N}$ is
a straightforward consequence of the Poincar\'e, trace and K\"orn's
inequalities in $\H^1_{\Gd}(\O)$. 
Since the proofs of \eqref{comp:grad} and \eqref{comp:strain} are similar,
we only consider the first one. Using 
$||\Pi^*_{h_n}\nabla\bv||_{\L^2(\O)^d}\le ||\nabla\bv||_{\L^2(\O)^d}$,
\eqref{def:gradnodal2} immediately gives the first inequality in \eqref{comp:grad}.
To establish the second one, we just notice, applying $\Pi_{h_n}^*$ to \eqref{def:gradnodal}
that $\Pi_{h_n}^*\nabla_{\disc_n}\bv=\Pi_{h_n}^*\nabla\bv$, which gives, plugged
into \eqref{def:gradnodal},
\[
\nabla\bv = \Pi^*_{h_n}\nabla_{\disc_n}\bv
+ \stab^{-1/2}\stiff^{1/2}\big(\nabla_{\disc_n}\bv - \Pi^*_{h_n} \nabla_{\disc_n}\bv\big).
\]
The second estimate of \eqref{comp:grad} follows by taking the $\L^2(\O)^d$ norm
of this equality and using once more the fact that the orthogonal projection
$\Pi^*_{h_n}$ has norm $1$.

Limit-conformity is then easy to establish. For any $\bL\in \H_{\div}(\O,\Gn)^d$
and any $\bv\in \bV_{h_n}=\X{\disc_n}$, by using \eqref{def:gradnodal2} we have
\begin{eqnarray}
\lefteqn{\Bigg|
\int_\Omega \big(\nabla_{\disc_n} \bv(x):\bL(x) +  \Pi_{\disc_n} \bv(x) \cdot \div(\bL)(x)\big)  \d x
-\int_{\Gn}\gamma_\n(\bL)(x)\cdot\T_{\disc_n}(\bv)(x)\d S(x)\Bigg|}\nonumber\\
&\le &\Bigg|
\int_\Omega \big(\nabla \bv(x):\bL(x) +  \bv(x) \cdot \div(\bL)(x)\big)  \d x
-\int_{\Gn}\gamma_\n(\bL)(x)\cdot \gamma(\bv)(x)\d S(x)\Bigg|\nonumber\\
&&+\Bigg|\int_\Omega \mathcal L_{h_n}\nabla\bv(x):\bL(x)\d x\Bigg|=T_1+T_2.
\label{limconf:1}
\end{eqnarray}
By conformity of $\bV_{h_n}$ we have $T_1=0$. Thanks to \eqref{comp:grad}
and denoting by $\mathcal L_{h_n}^{\star}=({\rm Id}-\Pi_h^*)(\stab^{1/2}\stiff^{-1/2}-{\rm Id})$
the dual operator of $\mathcal L_{h_n}$, we can write
\begin{eqnarray*}
T_2&=&\Bigg|\int_\Omega \nabla\bv(x)
:\mathcal L_{h_n}^{\star}\bL(x)\d x\Bigg|\\
&\le& ||\nabla \bv||_{\L^2(\O)^d}||\mathcal L_{h_n}^{\star}\bL||_{\L^2(\O)^d}
\le \cter{cst2} ||\nabla_{\disc_n} \bv||_{\L^2(\O)^d}||\mathcal L_{h_n}^{\star}\bL||_{\L^2(\O)^d}.
\end{eqnarray*}
Plugged into \eqref{limconf:1}, this estimate on $T_2$ shows that
$W_{\disc_n}(\bL)\le \cter{cst2}||\mathcal L_{h_n}^{\star}\bL||_{\L^2(\O)^d}$.
As $\mathcal L_{h_n}^{\star}\to 0$ pointwise as $n\to\infty$,
this concludes the proof of the limit-conformity of $(\disc_n)_{n\in\N}$.

\begin{remark} Reference \cite{PS06} provides an $\mathcal O(h)$ error estimate for
\eqref{stab-nodal} under very strong assumptions
on the solution to the continuous equation \eqref{eq:elas-strong}, namely
$\buex\in C^2(\overline{\Omega})$. Embedding \eqref{stab-nodal} into the
Gradient Scheme framework allowed us to establish the same error estimate
under no regularity assumption on the exact solution (see Theorem \ref{thm:error-est})
and that, contrary to what is written in \cite[p848]{PS06}, the
smoothness of the solution is not required for the error analysis of the method.
\end{remark}

\begin{remark}\label{rem:nl} As a consequence of these properties and of Theorem \ref{th:convnl},
we deduce that the Gradient Scheme discretisation \eqref{def:gsnodal}--\eqref{def:gradnodal}
coming from the stabilised nodal strain formulation of the linear elasticity
equations can be used to define a ``stabilised nodal strain formulation for
\emph{non-linear} elasiticity'' \eqref{grad-scheme}, and gives a converging scheme
for these equations. In this case, the tensors $\stiff$ and $\mathbb{D}$ in
\eqref{def:gradnodal} should be chosen accordingly to the considered non-linear equation,
e.g. by selecting linear tensors with Lam\'e's coefficients of the correct order of magnitude
with respect to the non-linear model.
\end{remark}

\begin{remark} We can also construct the ``nodal stabilised''
Gradient Discretisation $\disc$ by \eqref{def:gsnodal}--\eqref{def:gradnodal}
starting from a non-conforming Finite Element discretisation $\bV_h$ (or,
for that matter, any initial Gradient Discretisation built on a polygonal
discretisation of $\O$ as defined in \cite{DRO13}). In this case,
the preceding reasoning shows that if $(\bV_{h_n})_{n\in\N}$ is consistent,
limit-conforming and coercive then the corresponding nodal stabilised Gradient Discretisation
$(\disc_n)_{n\in\N}$ is also consistent, limit-conforming and coercive.
\end{remark}

\subsection{Hu-Washizu-based formulation on quadrilateral meshes}

We now consider a Finite Element method based on a modified Hu-Washizu formulation \cite{LRW06} 
for quadrilateral meshes. We start with the statically 
condensed displacement-based formulation in \cite{LRW06} of 
the following form: find $\bu_h \in  \bV_h $ such that
\begin{equation} \label{eq:disab}
\int_\O P_{S_h} \strain(\bv_h)(x) : \stiff_h P_{S_h} \strain(\bu_h)(x)\d x = \ell(\bv_h),
\quad \bv_h \in  \bV_h \ ,
\end{equation}
where $\bV_h$ is the standard conforming Finite Element 
space constructed from piecewise bilinear polynomials on a reference element, 
$P_{S_h}$ is the $L^2$ orthogonal projection onto the 
discrete space of stress $S_h$, and $\stiff_h$ is some positive-definite 
symmetric operator approximating the classical linear elasticity tensor $\stiff$
with constant Lam\'e coefficients, $\stiff\btau=\lambda\tr(\btau)\mathbf{I}+2\mu\btau$.
We note that the space of stress $S_h\subset \L^2(\Omega)^d $ is defined element-wise, 
and there is no continuity condition for its element across the boundary of cell 
in $\CT_h$. Various Finite Element methods used in alleviating locking effects are 
derived using this formulation \cite{LRW06,DLRW06}. Among them, the most popular 
methods are the assumed enhanced strain method of Simo and Rifai \cite{SR90}, the 
strain gap method of Romano,  Marrotti de Sciarra and Diaco \cite{RMD01}, and 
the mixed enhanced strain method of Kasper and Taylor  \cite{KT00}. 
We now consider the action of the operator $\stiff_h $ on a tensor 
$\bd_h= P_{S_h}\strain(\bu_h)$ as derived in \cite{LRW06}.
We use an orthogonal decomposition of $S_h$
in the form 
\[
S_h=S_h^c \oplus S_h^t \ ,
\]
where
\[
S_h^c := \{ \btau \in S_h \ | \ \ \stiff \btau \in S_h \}
\]
and $S_h^t$ is the orthogonal complement of $S_h^c$.
We consider the case where  the ope\-rator $\stiff_h$ is expressed as \cite{LRW06}
\begin{equation}\label{def:Ch}
\stiff_h \bd_h = \stiff P_{S_h^c} \bd_h + \theta P_{S_h^t} \bd_h
\end{equation}
where $P_{S_h^c}$ and $P_{S_h^t}$ are the orthogonal projections
onto $S_h^c$ and $S_h^t$ and $\theta>0$ is a constant only depending upon the Lam\'e coefficients
$\lambda,\mu$ of $\stiff$ and upon the parameter $\alpha>0$ of the modified
three-field  Hu-Washizu formulation \cite{LRW06}.
When the modified Hu-Washizu formulation 
is equivalent to the Hellinger-Reissner formulation, $\theta$ does not depend on $\alpha$.

\begin{remark}
The expression for the action of $\stiff_h$ is obtained in \cite{LRW06} using Voigt notation for tensors. 
However, we give here the expression for the discrete space of stress using 
the full tensor notation so that  we have 
\[
P_{S_h} \strain(\bu_h) = 
\frac{1}{2} \left(P_{S_h} (\nabla \bu_h) + P_{S_h} (\nabla \bu_h)^T\right).
\]
\end{remark}

We restrict ourselves, for simplicity of presentation, to the two-dimensional case, where  $\bd_h$ is a $2 $ by $2$ tensor. 
We consider three choices for $S_h$, where this space is generated (through
conformal transformations) from bases $S_{\Box}$  defined on $\hat{K}:=(-1,1)^2$.
Let these three choices be denoted by $S_h^i$ and $S^i_{\Box}$, 
$1 \leq i \leq 3$.
\begin{eqnarray*}
S_{\Box}^1 := \left[\begin{array}{ccc}
\mbox{span}\{1,\hat y\} & \mbox{span}\{1\} \\
\mbox{span}\{1\}&\mbox{span}\{1, \hat x\} 
\end{array}\right], \quad 
S_{\Box}^2 := \left[\begin{array}{ccc}
\mbox{span}\{1,\hat y\} &\mbox{span}\{1,\hat x,\hat y \}\\
\mbox{span}\{1,\hat x,\hat y\}&\mbox{span}\{1, \hat x \}
\end{array}\right],
 \end{eqnarray*}
 and 
\begin{eqnarray*}
S_{\Box}^3 := \left[\begin{array}{ccc}
\mbox{span}\{1\} &\mbox{span} \{1, \hat x,\hat y\}\\
\mbox{span}\{1, \hat x,\hat y\} &\mbox{span}\{1\} 
\end{array}\right]
\end{eqnarray*}
While the spherical part of the stress might be polluted by
checkerboard modes  as in the case of the $Q_1-P_0$ element,
it is proved that the error in displacement satisfies a 
$\lambda$-independent \emph{a priori} error estimate \cite{LRW06}. 

\medskip

Let us now prove that if $S_h=S_h^i$ for some $1\le i\le 3$
then \eqref{eq:disab} is a Gradient Scheme. We
define 
\begin{equation}\label{huw:grad}
\begin{array}{c}
\dsp \X{\disc}=\bV_h\,,\quad \Pi_\disc\bv_h=\bv_h\,,\quad \T_\disc\bv_h=\gamma(\bv_h)_{|\Gn}
\mbox{ and }\\
\dsp\nabla_\disc \bv_h = P_{S_h^c}\nabla\bv_h + \sqrt{\theta}\,\stiff^{-1/2} P_{S_h^t}\nabla \bv_h.
\end{array}\end{equation}
We note that, by symmetry of $\stiff$, $S_h^c$ and $S_h^t$ are closed under transposition and
therefore the projections onto those spaces commute with the transposition. By
Lemma \ref{lem:root}, the definition of $\nabla_\disc$ thus shows that 
\begin{equation}\label{huw:strain}
\strain_\disc (\bv_h) = P_{S_h^c}\strain(\bv_h) + \sqrt{\theta}\,\stiff^{-1/2} P_{S_h^t}\strain(\bv_h).
\end{equation}

We now prove that the Gradient Scheme corresponding to the Gradient Discretisation
$\disc=(\X{\disc},\Pi_\disc,\T_\disc,\nabla_\disc)$
is precisely the Hu-Washizu scheme \eqref{eq:disab}. Let us first start with a lemma.

\begin{lemma}\label{lem:stab}
For any of the choices $S^i_h$ ($1\le i\le 3$) described above and for any linear
elasticity tensor $\stab$, $(S^i_h)^c$ is closed under $\stab$,
that is $\stab \bL\in (S^i_h)^c$ whenever $\bL \in (S^i_h)^c$.
In particular,
\begin{equation}\label{eq:orth}
\forall \bL,\bG\in \L^2(\O)^d\,,\quad
\int_\O \mathbb{D}P_{(S^i_h)^c}\bL(x):P_{(S^i_h)^t}\bG(x)\d x=0.
\end{equation}
\end{lemma}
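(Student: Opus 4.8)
The plan is to reduce the statement to an elementary entry-by-entry computation, using that an isotropic elasticity tensor scales the off-diagonal entries of a $2\times2$ tensor and mixes the two diagonal entries by a single invertible matrix. Writing a linear-elasticity tensor in the constitutive form $\stab\btau=\widetilde\lambda\tr(\btau)\mathbf{I}+2\widetilde\mu\btau$ (and similarly $\stiff\btau=\lambda\tr(\btau)\mathbf{I}+2\mu\btau$), one computes
\[
(\stiff\btau)_{11}=(\lambda+2\mu)\btau_{11}+\lambda\btau_{22},\quad
(\stiff\btau)_{22}=\lambda\btau_{11}+(\lambda+2\mu)\btau_{22},\quad
(\stiff\btau)_{12}=2\mu\btau_{12},\ \ (\stiff\btau)_{21}=2\mu\btau_{21}.
\]
Thus the $(1,2)$ and $(2,1)$ entries remain in their own polynomial spaces, while the diagonal pair $(\btau_{11},\btau_{22})$ is transformed by $M=\bigl(\begin{smallmatrix}\lambda+2\mu&\lambda\\\lambda&\lambda+2\mu\end{smallmatrix}\bigr)$, whose eigenvalues $2(\lambda+\mu)$ and $2\mu$ are positive by the coercivity in \eqref{hyp:stiff}. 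In particular $\lambda\neq0$ in the only non-trivial situation: if $\lambda=0$ then $\stiff=2\mu\,{\rm Id}$, so $(S^i_h)^c=S^i_h$ and $(S^i_h)^t=\{0\}$, and both assertions are immediate.

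First I would characterise $(S^i_h)^c$ uniformly in $i$. Denote by $V_{11}$, $V_{12}=V_{21}$, $V_{22}$ the scalar polynomial spaces attached to the four entries of $S^i_\Box$. For $\btau\in S^i_h$ the conditions $(\stiff\btau)_{12}\in V_{12}$ and $(\stiff\btau)_{21}\in V_{21}$ hold automatically, so $\stiff\btau\in S^i_h$ reduces to the two diagonal requirements. Since $\btau_{11}\in V_{11}$, the requirement $(\lambda+2\mu)\btau_{11}+\lambda\btau_{22}\in V_{11}$ forces $\lambda\btau_{22}\in V_{11}$, hence $\btau_{22}\in V_{11}\cap V_{22}$ (using $\lambda\neq0$); symmetrically $\btau_{11}\in V_{11}\cap V_{22}$. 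These memberships are also clearly sufficient, so
\[
(S^i_h)^c=\{\btau\in\L^2(\O)^d:\btau_{11},\btau_{22}\in V_{11}\cap V_{22},\ \btau_{12}\in V_{12},\ \btau_{21}\in V_{21}\}.
\]
Inspecting the three bases confirms the expected descriptions (for instance $(S^1_h)^c$ consists of the constant tensors and $(S^3_h)^c=S^3_h$), but the decisive structural fact is that \emph{both} diagonal entries of any element of $(S^i_h)^c$ lie in the single common space $V_{11}\cap V_{22}$.

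Closure under an arbitrary isotropic $\stab$ is then immediate from the same entry formulas: for $\bL\in(S^i_h)^c$ the entries $(\stab\bL)_{11}=(\widetilde\lambda+2\widetilde\mu)\bL_{11}+\widetilde\lambda\bL_{22}$ and $(\stab\bL)_{22}=\widetilde\lambda\bL_{11}+(\widetilde\lambda+2\widetilde\mu)\bL_{22}$ are linear combinations of two elements of $V_{11}\cap V_{22}$, hence again lie in $V_{11}\cap V_{22}$, while $(\stab\bL)_{12}=2\widetilde\mu\bL_{12}\in V_{12}$ and $(\stab\bL)_{21}=2\widetilde\mu\bL_{21}\in V_{21}$; therefore $\stab\bL\in(S^i_h)^c$. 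For the orthogonality \eqref{eq:orth} I would then argue abstractly: $P_{(S^i_h)^c}\bL\in(S^i_h)^c$, so by the closure just established $\stab\,P_{(S^i_h)^c}\bL\in(S^i_h)^c$; since $(S^i_h)^t$ is by definition the $\L^2(\O)^d$-orthogonal complement of $(S^i_h)^c$ inside $S^i_h$ and $P_{(S^i_h)^t}\bG\in(S^i_h)^t$, the integral in \eqref{eq:orth} is exactly the $\L^2(\O)^d$ inner product of an element of $(S^i_h)^c$ with an element of $(S^i_h)^t$, and so vanishes.

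The only genuine obstacle is the second step — identifying $(S^i_h)^c$ and, above all, recognising that its two diagonal slots share the \emph{single} space $V_{11}\cap V_{22}$. This symmetry is precisely what is preserved by the diagonal mixing of any isotropic $\stab$, and it is where the isotropy of the constitutive law enters essentially: for a genuinely anisotropic stabilisation coupling the off-diagonal and diagonal entries (a nonzero $\stab_{1112}$, say) one would push an $\hat x$-mode from $\bL_{12}$ into $(\stab\bL)_{11}\notin V_{11}$ for $i=2$, and the closure would fail. All remaining verifications are routine linear algebra on the explicit bases.
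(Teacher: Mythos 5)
Your proof is correct and follows essentially the same route as the paper's: both arguments reduce to the observation that the diagonal entries (equivalently, the trace) of any element of $(S^i_h)^c$ must lie in $\mathrm{span}\{1,\hat y\}\cap\mathrm{span}\{1,\hat x\}=\mathrm{span}\{1\}$, after which closure under any isotropic $\stab=\widetilde\lambda\tr(\cdot)\mathbf{I}+2\widetilde\mu\,\mathrm{Id}$ and the orthogonality \eqref{eq:orth} follow exactly as you argue. The only differences are cosmetic: the paper extracts the constant-trace fact by writing $\tr(\btau)\mathbf{I}=\lambda^{-1}(\stiff\btau-2\mu\btau)$ and then invokes Lemma \ref{lem:comp} to see that $\stiff\stab$ is again isotropic, whereas you give a full entrywise characterisation of $(S^i_h)^c$ and, as a small bonus, explicitly cover the degenerate case $\lambda=0$ that the paper's division by $\lambda$ silently excludes.
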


\begin{proof} If $\btau\in (S_h^i)^c$ then $\tr(\btau)\mathbf{I}
=\lambda^{-1}(\stiff\btau-2\mu\btau)\in S_h^i$. The definitions of $S^i_h$ then shows,
by examining the coefficients $(1,1)$ and $(2,2)$ of $\tr(\btau)\mathbf{I}$, that
$\tr(\btau)\in\mbox{span}\{1,\hat{y}\}\cap\mbox{span}\{1,\hat{x}\}=\mbox{span}\{1\}$
and thus that $\tr(\btau)$ is constant.

By Lemma \ref{lem:comp}, we see that $\stiff\stab$ is a linear
elasticity tensor with some Lam\'e coefficients $(\alpha,\beta)$ and therefore
$\stiff\stab\btau =\alpha\tr(\btau)\mathbf{I} + 2\beta\btau$.
The second term in this right-hand side clearly belongs to $S_h^i$
and, since $\tr(\btau)$ is constant, it is equally obvious that the first term in the
right-hand side belongs to $S_h^i$ (which contains $\mbox{span}\{\mathbf{I}\}$).
Hence, $\stab\btau\in (S_h^i)^c$ whenever $\btau\in (S_h^i)^c$.
Formula \eqref{eq:orth} is a consequence of this and of the orthogonality
of $S_h^c$ and $S_h^t$. \end{proof}

~

We now consider the left-hand side of \eqref{grad-scheme:lin}. Using
\eqref{eq:orth} with $\mathbb{D}=\stiff^{1/2}$ (which is a linear
elasticity tensor by Lemma \ref{lem:comp}), the cross-products involving $\stiff^{1/2}P_{S_h^c}$
and $P_{S_h^t}$
which appear when plugging \eqref{huw:strain} into \eqref{grad-scheme:lin} vanish and we obtain
\begin{multline}\label{huw:lhs1}
\int_\O \stiff(x)\strain_\disc(\bu_h)(x):\strain_\disc(\bv_h)(x)\d x
\\
= \int_\O \stiff(x)P_{S_h^c}\strain(\bu_h)(x): P_{S_h^c}\strain(\bu_h)(x)\d x
+\int_\O \theta P_{S_h^t}\strain(\bu_h)(x): P_{S_h^t}\strain(\bu_h)(x)\d x.
\end{multline}
Using now the definition \eqref{def:Ch} of $\stiff_h$ and the orthogonality
property \eqref{eq:orth} with $\mathbb{D}=\stiff$,
the left-hand side of \eqref{eq:disab} can be written
\begin{multline} \label{huw:lhs2}
\int_\O \stiff_h P_{S_h} \strain(\bu_h)(x):P_{S_h} \strain(\bv_h)(x) \d x \\
=
\int_\O \left[\stiff(x) P_{S_h^c} \strain(\bu_h)(x)+\theta P_{S_h^t}\strain(\bu_h)(x)\right]
:\left[P_{S_h^c} \strain(\bv_h)(x) +P_{S_h^t} \strain(\bv_h)(x)\right]\d x\\
= \int_\O \stiff(x) P_{S_h^c} \strain(\bu_h)(x):P_{S_h^c} \strain(\bv_h)(x)\d x
+\int_\O\theta P_{S_h^t}\strain(\bu_h)(x):P_{S_h^t} \strain(\bv_h)(x)\d x.
\end{multline}
Equations \eqref{huw:lhs1} and \eqref{huw:lhs2} show that the left-hand sides
of the Gradient Scheme \eqref{grad-scheme:lin} and of the Hu-Washizu formulation
\eqref{eq:disab} are identical. As the right-hand sides of these equations
are trivially identical (by definition of $\Pi_\disc$ and $\T_\disc$),
this shows that the statically condensed  Hu-Washizu  formulation \cite{LRW06} is the
Gradient Scheme corresponding to the Gradient Discretisation defined by \eqref{huw:grad}.

\medskip

Let us now see that the Gradient Discretisation \eqref{huw:grad} satisfies
the properties defined in Section \ref{sec:gs}. 
The coercivity is again a consequence of \eqref{comp:grad} and \eqref{comp:strain}
that we can prove in the following way. First, since the norms of $P_{S_h^c}$
and $P_{S_h^t}$ are bounded by $1$, the definition \eqref{huw:grad} of $\nabla_\disc$
and the property \eqref{huw:strain} of $\strain_\disc$ immediately give
the first inequalities in \eqref{comp:grad} and \eqref{comp:strain}.
We then write, from \eqref{huw:strain},
\begin{equation}\label{huw:coer}
\stiff^{1/2}\strain_\disc(\bv_h)= \stiff^{1/2}P_{S_h^c}\strain(\bv_h)
+\sqrt{\theta}\, P_{S_h^t}\strain(\bv_h).
\end{equation}
By Lemmas \ref{lem:stab} and \ref{lem:comp}, we have $\stiff^{1/2}P_{S_h^c}\nabla\bv_h\in S_h^c$
and \eqref{huw:coer} thus shows that
$P_{S_h^c}\stiff^{1/2}\strain_\disc(\bv_h)=\stiff^{1/2}P_{S_h^c}\strain(\bv_h)$
and $P_{S_h^t}\stiff^{1/2}\strain_\disc(\bv_h)
=\sqrt{\theta}\, P_{S_h^t}\strain(\bv_h)$.
This allows us to write
\begin{eqnarray*}
P_{S_h}\strain(\bv_h)&=& P_{S_h^c}\strain(\bv_h)+P_{S_h^t}\strain(\bv_h)\\
&=&\stiff^{-1/2}P_{S_h^c}\stiff^{1/2}\strain_\disc(\bv_h)
+\sqrt{\theta}^{\;-1}P_{S_h^t}\stiff^{1/2}\strain_\disc(\bv_h).
\end{eqnarray*}
This relation shows that $||P_{S_h}\strain(\bv_h)||_{\L^2(\O)^d}\le 
\ctel{huw:cst}||\strain_{\disc}(\bv_h)||_{\L^2(\O)^d}$ with $\cter{huw:cst}$ not depending on $h$
or $\bv_h$. Since it can be proved (see \cite{LRW06}) that $||\strain(\bv_h)||_{\L^2(\O)^d}
\le \ctel{huw:cst2}||P_{S_h}\strain(\bv_h)||_{\L^2(\O)^d}$ with $\cter{huw:cst2}$
not depending on $h$ or $\bv_h$, the second inequality in \eqref{comp:strain}
follows immediately. The second inequality in \eqref{comp:grad} can then be established
by using the continuous K\"orn inequality $||\nabla\bv_h||_{\L^2(\O)^d}\le
\ctel{huw:cst3}||\strain(\bv_h)||_{\L^2(\O)^d}$ and the
second inequality of \eqref{comp:strain} that we just established.

To establish the consistency and limit-conformity of the Gradient Discretisation,
we notice that 
\begin{equation}\label{huw:grad2}
\nabla_\disc \bv_h = \nabla\bv_h + (P_{S_h^c}-\mbox{Id})\nabla\bv_h
+\sqrt{\theta}\,\stiff^{-1/2}P_{S_h^t}\nabla \bv_h = \nabla\bv_h + \mathcal L_h\nabla\bv_h
\end{equation}
where $\mathcal L_h=P_{S_h^c}-\mbox{Id}+\sqrt{\theta}\,\stiff^{-1/2}P_{S_h^t}:
\L^2(\O)^d\to \L^2(\O)^d$ is a self-adjoint operator (because $\sqrt{\theta}\,\stiff^{-1/2}$
is constant) whose norm is bounded independently on $h$.
As $S_h^c$ always contains the set of constant tensors $S_h^0$ and $P_{S_h^0}\to \mbox{Id}$
as $h\to 0$, we have $P_{S_h^c}=P_{S_h^c}(\mbox{Id}-P_{S_h^0})+P_{S_h^0}\to \mbox{Id}$
and $P_{S_h^t}=P_{S_h^t}(\mbox{Id}-P_{S_h^0})\to 0$ pointwise as $h\to 0$.
Hence, $\mathcal L_h\to 0$ pointwise as $h\to 0$.
Expression \eqref{huw:grad2} then allows us to prove
the consistency and limit-conformity of the Gradient Discretisation \eqref{huw:grad} by using the same
techniques as in Section \ref{sec:propstabnodal}.

\begin{remark} The same construction can be made when $\stiff$ is only
piecewise constant on $\CT_h$. 
\end{remark}

\begin{remark}
In contrast to \cite{BCR04,LRW06}, the convergence result of Theorem \ref{thm:error-est}
is obtained for the Hu-Washizu scheme without assuming the full $H^2$-regularity of the solution. 
Moreover, as in Remark \ref{rem:nl}, this construction also gives
a converging Hu-Washizu-based scheme for non-linear elasticity equations.
\end{remark}

%%%% OLD NON-LINEAR EXEMPLE, PROBABLY TO BE REMOVED IN THE END
%%
%\subsection{Formulation based on projecting the divergence onto the space of piecewise constant functions}

%Let the quasi-uniform mesh $\CT_h$ consist only of quadrilaterals and 
%hexahedra, and $\bV_h$  be the standard bilinear or trilinear  Finite Element 
%spaces based on $\CT_h$. 
%Assuming the non-linear Hencky-von Mises elasticity model 
%with $\stress(x,\strain(\bu))=\widetilde{\lambda}(\dev(\strain(\bu)))\tr(\strain(\bu))\mathbf{I}
%+\widetilde{\mu}(\dev(\strain(\bu)))\strain(\bu)$, 
%our  problem is to find $\bu_h \in \bV_h$ such that \cite{BF91,Bra01}
%\[ \int_{\Omega} \widetilde{\mu}(\dev(\strain(\bu_h)))\strain(\bu_h):\strain(\bv_h)\,\d x  + 
%\int_{\Omega} \widetilde{\lambda}(\dev(\strain(\bu_h))) P_0 \nabla \cdot \bu_h P_0 \nabla \cdot \bv_h \,\d x = 
%\ell(\bv_h),\]
%where $P_0$ is the orthogonal projection onto 
%the space of piecewise constant function with respect to the mesh $\CT_h$. 
%This formulation arises after statically condensing out the pressure from 
%the saddle point form using 
%the standard bilinear/trilinear Finite Element space for the displacement and piecewise constant 
%Finite Element for the pressure, and  is often used to avoid the volumetric locking in the nearly incompressible limit 
%when $\widetilde \lambda \rightarrow \infty$ \cite{Bra01,BF91}.
%
%%%%%%%%%%%%%%%%%%%%%%%%

\subsection{Technical lemmas}

\begin{lemma}\label{lem:comp}
If $\stiff_1$ and $\stiff_2$ are linear elasticity tensors in $\R^d$ with Lam\'e
coefficients $(\lambda_1,\mu_1)$ and $(\lambda_2,\mu_2)$, then, for
any $\bL\in \R^{d\times d}$,
\begin{equation}\label{form:comp}
\stiff_1\stiff_2\bL = (\lambda_1\lambda_2 d +2\mu_1\lambda_2+2\mu_2\lambda_1)
\tr(\bL)\mathbf{I}+4\mu_1\mu_2 \bL.
\end{equation}
If $\stiff$ is a linear elasticity tensor with Lam\'e coefficients $(\lambda,\mu)$,
then
\begin{equation}\label{form:root}
\stiff^{1/2}\bL = \frac{\sqrt{2\mu+\lambda d}-\sqrt{2\mu}}{d}\tr(\bL)\mathbf{I}
+\sqrt{2\mu}\bL.
\end{equation}
\end{lemma}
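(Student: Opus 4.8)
The plan is to exploit the fact that every linear elasticity tensor is simultaneously diagonalised by the orthogonal (for the inner product ``$:$'') splitting of $\R^{d\times d}$ into its spherical and trace-free parts. Writing $Q\bL=\frac1d\tr(\bL)\mathbf{I}$ and $R\bL=\bL-\frac1d\tr(\bL)\mathbf{I}$, one checks directly that $Q$ and $R$ are complementary orthogonal projections for ``$:$'', i.e. $Q+R=\mbox{Id}$, $Q^2=Q$, $R^2=R$, $QR=RQ=0$ and $Q\bL:R\bG=0$ for all $\bL,\bG$. Since $\tr(\bL)\mathbf{I}=dQ\bL$, a linear elasticity tensor with coefficients $(\lambda,\mu)$ rewrites as
\[
\bL\longmapsto \lambda\tr(\bL)\mathbf{I}+2\mu\bL=(\lambda d+2\mu)Q\bL+2\mu R\bL,
\]
so it acts as the scalar $\lambda d+2\mu$ on the line $\R\mathbf{I}=\mbox{Ran}(Q)$ and as the scalar $2\mu$ on the trace-free tensors $\mbox{Ran}(R)$. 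In particular every linear elasticity tensor is a combination of the commuting idempotents $Q,R$, hence such tensors commute and compose simply by multiplying the corresponding pairs of coefficients.

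For \eqref{form:comp} I would just multiply the diagonalised forms: $\stiff_1\stiff_2$ acts as $(\lambda_1 d+2\mu_1)(\lambda_2 d+2\mu_2)$ on $\mbox{Ran}(Q)$ and as $4\mu_1\mu_2$ on $\mbox{Ran}(R)$, that is
\[
\stiff_1\stiff_2\bL=(\lambda_1 d+2\mu_1)(\lambda_2 d+2\mu_2)Q\bL+4\mu_1\mu_2 R\bL.
\]
Substituting $Q\bL=\frac1d\tr(\bL)\mathbf{I}$ and $R\bL=\bL-\frac1d\tr(\bL)\mathbf{I}$ and collecting terms, the coefficient in front of $\tr(\bL)\mathbf{I}$ is $\frac1d\big[(\lambda_1 d+2\mu_1)(\lambda_2 d+2\mu_2)-4\mu_1\mu_2\big]=\lambda_1\lambda_2 d+2\mu_1\lambda_2+2\mu_2\lambda_1$ and the coefficient in front of $\bL$ is $4\mu_1\mu_2$, which is exactly \eqref{form:comp}. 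This step is pure bookkeeping.

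For \eqref{form:root} the point is that $\stiff^{1/2}$ denotes the unique symmetric positive-definite square root of $\stiff$ for ``$:$''. The symmetry and coercivity in \eqref{hyp:stiff} make $\stiff$ self-adjoint and positive-definite for ``$:$'', and reading off the diagonalisation above its two eigenvalues are $2\mu$ (on $\mbox{Ran}(R)$) and $\lambda d+2\mu$ (on $\mbox{Ran}(Q)$), whence $2\mu>0$ and $\lambda d+2\mu>0$ and both square roots are real. The functional calculus on $\stiff=(\lambda d+2\mu)Q+2\mu R$ then gives
\[
\stiff^{1/2}=\sqrt{\lambda d+2\mu}\,Q+\sqrt{2\mu}\,R.
\]
I would justify the identification by noting that the right-hand side is symmetric and positive-definite for ``$:$'' and, since $Q,R$ are orthogonal idempotents, squares to $\stiff$; by uniqueness of the positive square root it therefore equals $\stiff^{1/2}$. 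Re-expressing through $Q\bL=\frac1d\tr(\bL)\mathbf{I}$ and $R\bL=\bL-\frac1d\tr(\bL)\mathbf{I}$ yields
\[
\stiff^{1/2}\bL=\frac{\sqrt{\lambda d+2\mu}-\sqrt{2\mu}}{d}\tr(\bL)\mathbf{I}+\sqrt{2\mu}\,\bL,
\]
which is \eqref{form:root}.

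The only genuine subtlety — the hard part — is the identification of the functional-calculus object $\stiff^{1/2}$ with an explicit elasticity tensor: one must use that $\stiff$ really is self-adjoint and positive-definite for ``$:$'' (so that a unique positive square root exists) and that the candidate lies in the same commuting family, so that squaring it reproduces $\stiff$. Everything else reduces to an eigenvalue computation on the two-dimensional commutative algebra spanned by $Q$ and $R$. In fact one could bypass the functional-calculus discussion altogether and verify \eqref{form:root} purely algebraically, by squaring the claimed expression using \eqref{form:comp} with the two factors taken equal to the candidate: the cross-terms telescope through $\sqrt{\lambda d+2\mu}\,\sqrt{2\mu}$ so that the $\tr(\bL)\mathbf{I}$-coefficient collapses to $\lambda$ and the $\bL$-coefficient to $2\mu$.
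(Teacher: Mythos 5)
Your proof is correct, and it takes a mildly but genuinely different route from the paper's. The paper proves \eqref{form:comp} by direct expansion of $\stiff_1(\lambda_2\tr(\bL)\mathbf{I}+2\mu_2\bL)$, and obtains \eqref{form:root} by an ansatz: it looks for $\stiff^{1/2}$ as a linear elasticity tensor with unknown coefficients $(\alpha,\beta)$, squares it via \eqref{form:comp}, and solves $\alpha^2 d+4\alpha\beta=\lambda$, $4\beta^2=2\mu$ for the admissible branch. You instead diagonalise every linear elasticity tensor on the orthogonal spherical/deviatoric splitting $Q\bL=\frac1d\tr(\bL)\mathbf{I}$, $R=\mathrm{Id}-Q$, reducing both formulas to scalar arithmetic on the eigenvalues $\lambda d+2\mu$ and $2\mu$; your closing remark (square the candidate using \eqref{form:comp} and check it reproduces $\stiff$) is essentially the paper's computation in verification form. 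What your approach buys is a clean justification of \emph{which} root to take: the uniqueness of the symmetric positive-definite square root for ``$:$'' singles out the positive branch, whereas the paper's quadratic in $\alpha$ has two solutions and the selection of the correct one is left implicit. What the paper's approach buys is brevity and the absence of any spectral machinery. One small point common to both arguments: the lemma as stated does not explicitly assume $\mu>0$ and $\lambda d+2\mu>0$, which is needed for $\stiff^{1/2}$, $\sqrt{2\mu}$ and $\sqrt{2\mu+\lambda d}$ to make sense; you at least make this hypothesis visible by invoking the coercivity in \eqref{hyp:stiff}, which is the setting in which the lemma is applied.
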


\begin{proof} Formula \eqref{form:comp} is obtained by straightforward computation,
and Formula \eqref{form:root} by looking for $\stiff^{1/2}$ as a linear elasticity
tensor with coefficients $(\alpha,\beta)$ such that $\stiff^{1/2}\stiff^{1/2}=\stiff$,
which boils down from \eqref{form:comp} to solving $\alpha^2d+4\alpha\beta=\lambda$
and $4\beta^2=2\mu$.
\end{proof}

\begin{lemma}\label{lem:root} If $\mathbb{E}:(\R^{d\times d},:)\to (\R^{d\times d},:)$
is symmetric positive definite and satisfies, for all $\bL\in \R^{d\times d}$,
$(\mathbb{E}\bL)^T=\mathbb{E}\bL^T$, then $\mathbb{E}^{1/2}$ also satisfies
this property.
\end{lemma}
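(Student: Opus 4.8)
The plan is to encode both the hypothesis and the desired conclusion as commutation relations with the transposition map, and then exploit the fact that in finite dimension the positive square root of $\mathbb{E}$ is a polynomial in $\mathbb{E}$. First I would introduce the linear involution $\mathcal{J}:\R^{d\times d}\to\R^{d\times d}$ defined by $\mathcal{J}\bL=\bL^T$. A one-line index computation shows that $\mathcal{J}$ preserves the inner product ``$:$'' (indeed $\bL^T:\bG^T=\bL:\bG$) and is its own inverse. With this notation the hypothesis $(\mathbb{E}\bL)^T=\mathbb{E}\bL^T$ reads exactly as the commutation relation $\mathcal{J}\mathbb{E}=\mathbb{E}\mathcal{J}$, and the property to be proved for $\mathbb{E}^{1/2}$ is precisely $\mathcal{J}\mathbb{E}^{1/2}=\mathbb{E}^{1/2}\mathcal{J}$.

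Next I would use that $\mathbb{E}$ is symmetric positive definite on the finite-dimensional inner product space $(\R^{d\times d},:)$, so it admits a spectral decomposition with finitely many positive eigenvalues $\lambda_1,\dots,\lambda_r$, and $\mathbb{E}^{1/2}$ is the unique positive definite operator with $(\mathbb{E}^{1/2})^2=\mathbb{E}$, obtained by applying $t\mapsto\sqrt t$ to the eigenvalues. The key observation is that, the spectrum being finite, Lagrange interpolation furnishes a polynomial $p$ with $p(\lambda_k)=\sqrt{\lambda_k}$ for every $k$, whence $\mathbb{E}^{1/2}=p(\mathbb{E})$.

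Finally, since $\mathcal{J}$ commutes with $\mathbb{E}$ it commutes with every power $\mathbb{E}^n$, hence with the polynomial $p(\mathbb{E})=\mathbb{E}^{1/2}$; unwinding the definition of $\mathcal{J}$ this is exactly $(\mathbb{E}^{1/2}\bL)^T=\mathbb{E}^{1/2}\bL^T$ for all $\bL$, which is the claim. Equivalently, one may argue that $\mathcal{J}$ maps each eigenspace of $\mathbb{E}$ into itself (if $\mathbb{E}\bL=\lambda\bL$ then $\mathbb{E}\mathcal{J}\bL=\mathcal{J}\mathbb{E}\bL=\lambda\mathcal{J}\bL$), so $\mathcal{J}$ commutes with the spectral projections $P_k$ and therefore with $\mathbb{E}^{1/2}=\sum_k\sqrt{\lambda_k}\,P_k$. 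There is no genuine obstacle in this lemma; the only point needing a little care is the justification that $\mathbb{E}^{1/2}$ is an honest polynomial in $\mathbb{E}$ — which is exactly where the finite-dimensionality of $\R^{d\times d}$ enters — rather than merely an abstract limit produced by the functional calculus.
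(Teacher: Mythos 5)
Your proof is correct, but it follows a genuinely different route from the paper's. The paper defines the conjugated operator $\mathcal L\bL=(\mathbb{E}^{1/2}\bL^T)^T$, checks (using that transposition is a self-inverse isometry for ``$:$'') that $\mathcal L$ is symmetric positive definite, computes $\mathcal L^2=\mathbb{E}$ from the hypothesis, and concludes $\mathcal L=\mathbb{E}^{1/2}$ by uniqueness of the symmetric positive definite square root. You instead phrase everything as commutation with the transposition map $\mathcal J$ and invoke the fact that $\mathbb{E}^{1/2}$ is a polynomial in $\mathbb{E}$ (Lagrange interpolation on the finite spectrum), so that anything commuting with $\mathbb{E}$ commutes with $\mathbb{E}^{1/2}$. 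Your argument is slightly more general -- it shows that \emph{any} linear map commuting with $\mathbb{E}$ commutes with $\mathbb{E}^{1/2}$, with no need for that map to be an isometry or an involution -- whereas the paper's argument leans on those two properties of transposition to make the conjugate $\mathcal L$ again symmetric positive definite. The paper's version is a bit shorter and needs only the uniqueness of the positive square root; yours requires the polynomial (or spectral-projection) representation of $\mathbb{E}^{1/2}$, which you correctly identify as the one point needing justification and which you do justify via finite-dimensionality. Both proofs are complete and valid.
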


\begin{proof} Let $\mathcal L:\R^{d\times d}\to \R^{d\times d}$
be the endomorphism $\mathcal L \bL=(\mathbb{E}^{1/2}\bL^T)^T$.
Using $\bL:\bG=\bL^T:\bG^T$ and the symmetric positive definite character
of $\mathbb{E}^{1/2}$, it is easy to check that
$\mathcal L$ is symmetric positive definite. Moreover, by assumption on $\mathbb{E}$,
$\mathcal L^2\bL=(\mathbb{E}^{1/2}[(\mathbb{E}^{1/2}\bL^T)^T]^T)^T
=(\mathbb{E}^{1/2}\mathbb{E}^{1/2}\bL^T)^T=(\mathbb{E}\bL^T)^T=
\mathbb{E}\bL$. Henceforth, $\mathcal L$ is the
symmetric positive definite square root $\mathbb{E}^{1/2}$ of $\mathbb{E}$
and thus $\mathbb{E}^{1/2}\bL^T = \mathcal L(\bL^T)=(\mathbb{E}^{1/2}\bL)^T$,
which completes the proof.
\end{proof}

\section{Conclusion}

In this work, we developed the Gradient Scheme framework for linear
and non-linear elasticity equations. We proved that this framework
makes possible error estimates (for linear equations) and convergence
analysis (for non-linear equations) of numerical methods under very
few assumptions. In particular, these results hold without assuming the 
full $H^2$-regularity of the exact solution, which can be lost in the cases
of composite materials or strongly non-linear models.

We showed that many classical and modern numerical schemes
developed in the literature for elasticity equations are actually
Gradient Schemes. We even established that some three-field schemes,
based on a modified Hu-Washizu formulation and designed to be stable
in the quasi-incompressible limit, are also Gradient Schemes after
being recast in a displacement-only formulation by static condensation.

Since Gradient Schemes are seamlessly applicable to both linear and non-linear
equations, the embedding into this framework of numerical methods solely developed for
linear elasticity also allowed us to show how to adapt those methods
to non-linear elasticity, while retaining nice stability and convergence
properties.

\bibliographystyle{plain}
\bibliography{gs-elasticity-submitted}{}

\end{document}